\documentclass[11pt]{article}

\usepackage{amsmath,amssymb,amsfonts,amsthm,bbm}
\usepackage{color}
\RequirePackage[OT1]{fontenc}
\RequirePackage[numbers]{natbib}
\RequirePackage[colorlinks,citecolor=blue,urlcolor=blue]{hyperref}

\numberwithin{equation}{section}
\setcounter{tocdepth}{2}

\newtheorem{theorem}{Theorem}[section]

\newtheorem{lemma}[theorem]{Lemma}
\newtheorem{proposition}[theorem]{Proposition}
\newtheorem{condition}{Condition}

\newenvironment{customcondition}[1]
  {\innercustomthm}
  {\endinnercustomthm}

\renewcommand{\P}{\mathbb{P}}
\renewcommand{\H}{\mathbb{H}}
\newcommand{\E}{\mathbb{E}}
\newcommand{\R}{\mathbb{R}}
\newcommand{\T}{\mathbb{T}}

\newcommand{\supp}{\text{supp}}
\newcommand{\norm}[1]{\left| \left|#1\right| \right|}
\newcommand{\snorm}[1]{| | #1 | |}
\newenvironment{enumerate*}%

\setlength{\textwidth}{15.5cm}
\setlength{\textheight}{22.5cm}
\setlength{\topmargin}{-1.5cm}
\addtolength{\evensidemargin}{-1.5cm}
\addtolength{\oddsidemargin}{-1.5cm}

\title{Bayesian inverse problems with non-conjugate priors}
\author{Kolyan Ray\footnote{Statistical Laboratory, University of Cambridge, Wilberforce Road, Cambridge CB3 0WA, UK. E-mail: \href{mailto:k.m.ray@maths.cam.ac.uk}{k.m.ray@maths.cam.ac.uk}}}
\date{}

\begin{document}

\maketitle

\begin{abstract}
We investigate the frequentist posterior contraction rate of nonparametric Bayesian procedures in linear inverse problems in both the mildly and severely ill-posed cases. A theorem is proved in a general Hilbert space setting under approximation-theoretic assumptions on the prior. The result is applied to non-conjugate priors, notably sieve and wavelet series priors, as well as in the conjugate setting. In the mildly ill-posed setting minimax optimal rates are obtained, with sieve priors being rate adaptive over Sobolev classes. In the severely ill-posed setting, oversmoothing the prior yields minimax rates. Previously established results in the conjugate setting are obtained using this method. Examples of applications include deconvolution, recovering the initial condition in the heat equation and the Radon transform.\\

\noindent\emph{AMS 2000 subject classifications:} Primary 62G20; secondary 62G05, 62G08.\\
\noindent\emph{Keywords and phrases:} Rate of contraction, posterior distribution, nonparametric hypothesis testing.
\end{abstract}

\tableofcontents

\section{Introduction}

\subsection{Outline}

In this paper, we consider the problem of using Bayesian methods to estimate an unknown parameter $f$ from an observation $Y$ generated from the model
\begin{equation}
Y \equiv Y^{(n)} = A f + \frac{1}{\sqrt{n}} Z  .
\label{eq1}
\end{equation}
Here we assume that $f$ is an element of a separable Hilbert space $\H_1$, $A : \H_1 \rightarrow \H_2$ is a known, injective, continuous linear operator into another Hilbert space $\H_2$ and $Z$ is a Gaussian white noise. Many specific examples of regression fall under this general framework, such as deconvolution, recovery of the initial condition of the heat equation and the Radon transform (see Section \ref{examples} for details).

In the Bayesian framework, we treat the unknown element $f$ as a random variable and assign to it a prior distribution $\Pi$, defined on a $\sigma$-algebra $\mathcal{B}$ of (a subset of) the parameter space $\H_1$. We then condition on the observed data $Y$ to update this distribution to obtain the posterior distribution $\Pi (\cdot | Y)$ and thus obtain a sequence of data-driven random probability distributions. The Bayesian then draws his inference about $f$ based entirely on the posterior distribution. Recently, much focus has been given to the development of nonparametric procedures, where the support of $\Pi$ is infinite-dimensional.

We wish to study the asymptotic behaviour of the posterior distribution under the frequentist assumption that the data $Y$ is generated from the model \eqref{eq1} for some true parameter $f_0$. We shall measure this behaviour by considering if and at what rate the posterior contracts to the true $f_0$ as $ n \rightarrow \infty$ as defined in \citep{GhGhVV}. This question has been the object of much study in recent years (see e.g. \citep{GhGhVV,GiNi,Hu,ShWa,VVVZ2} for some examples), but the situation of inverse problems has only recently been considered and then only in the conjugate setting \citep{AgLaSt,KnVVVZ,KnVVVZ2}, where explicit posterior expressions are available. We shall use a novel approach to study possibly non-conjugate priors; we also recover some of the results from \citep{KnVVVZ,KnVVVZ2}.

While it is of considerable theoretical interest to understand the behaviour of Bayesian procedures in the non-conjugate setting, there are also strong practical reasons to do so. Although non-conjugate priors are more involved from a computational perspective, they are increasingly finding use due to their greater modelling flexibility and interpretability \citep{HoDe}. Meanwhile, advances in Markov chain sampling methods have meant that such procedures are increasingly tractable in practice (e.g. \citep{Ne}). For example, in the case of sieved priors discussed below we have that, conditional on the random truncation level $M$, the problem reduces to the case of a finite-dimensional model with Gaussian noise. When the prior product marginals are non-Gaussian, it is therefore possible to sample from the conditional posterior distribution using a finite dimensional MCMC scheme.

Our method of proof follows the testing approach introduced in \citep{GhGhVV} and thus does not rely on explicit computation of the posterior. A key ingredient to using this approach is the construction of suitable tests for the problem
\begin{equation}
H_0 : f = f_0 \quad \quad \quad \quad H_A : f \in \{ f : \norm{f - f_0}_{\H_1} \geq \xi_n \} 
\label{eq2}
\end{equation}
with exponentially decaying type-II errors for some sequence $\xi_n \rightarrow 0$. We follow the approach of \citep{GiNi} of using the concentration properties of appropriate centred linear estimators to construct suitable plug-in tests. If the operator $A$ in \eqref{eq1} is compact, it effectively "smooths" $f$ and so makes it more difficult to distinguish between the alternatives $H_0$ and $H_A$ based on the observation $Y$. To deal with this, we use general analogues of the Fourier techniques used in constructing linear estimators in the case of density deconvolution \citep{LoNi}. Due to the inverse nature of the problem, it is natural to construct such estimators using a diagonalizing basis for $A$. Moreover, since our approach requires good approximation properties within the support of the prior, we consider priors that are naturally characterized by (small modifications of) such a basis.

A key requirement of this testing approach is that the prior distribution assigns sufficient mass to a neighbourhood of the true parameter $f_0$. In this framework, this corresponds to establishing lower bounds for the probability that $Af$ is contained in small-ball centred at $Af_0$ (the "small-ball problem") under the prior. The inverse nature of the problem turns out to be of assistance with this condition, since $A$ shrinks $f$ towards the origin. In effect, $A$ changes the geometry of the problem by converting an $\H_2$-ball into a larger $\H_1$-ellipsoid, whose precise size increases with the level of ill-posedness. We shall rely on this notion in our proofs and expand upon the details below.

We apply our general result to prove contraction rates in a number of situations commonly arising in Bayesian inference, some adaptive and some not. For instance, in the case of sieve priors with random truncation, we show that under weak conditions in the mildly ill-posed setting, the procedure is fully rate adaptive (up to logarithmic factors) over Sobolev classes as in the direct case \citep{ArGaRo}. In the mildly ill-posed setting, similar adaptation results are obtained in the recent work of \citep{KnSzVVVZ} using direct methods in the case of a hierarchical, conditionally Gaussian prior and an empirical Bayes approach. In the severely ill-posed case, our results suggest that one should calibrate the prior according to the operator $A$ at hand. In this case, oversmoothing the prior by a suitable factor is sufficient to obtain a minimax rate of contraction. This is not surprising since centred linear estimators in the severely ill-posed case are often adaptive (see \citep{LoNi} for results on density estimation) and our tests are built around such estimators. In this setting, unless the prior satisfies an analytic smoothness condition, the bias of the linear estimator dominates its variance \citep{BuTs,LoNi} and consequently the minimum of the prior smoothness and the unknown true smoothness determines the rate. Since we construct our tests using a bias-variance decomposition of a linear estimator, it seems reasonable that our rate will reflect this.

When considering the specific example of deconvolution, we also consider a wavelet series prior on $[0,1]$. While it is canonical to work in the diagonalizing basis of $A$, in this case the Fourier basis, our results allow some flexibility in considering different yet closely related bases; in particular, this allows us to consider priors constructed using band-limited wavelets. This turns out to have useful consequences since we can use the functional characterization properties of wavelets to reflect a greater variety of prior assumptions - notably we consider H\"older smoothness assumptions in addition to Sobolev ones.

Unless otherwise stated, $\langle \cdot , \cdot \rangle_i $ and $\norm{\cdot}_i$ denote the inner product and norm of the Hilbert space $\H_i$, $i=1,2$. For $x,y \in \R$ we use the notation $x \lesssim y$ to denote that $x \leq K y$ for some universal constant $K$. For sequences $\{ a_n\}$ and $\{ b_n\}$ we write $a_n \simeq b_n$ to mean that there exist constants $C_1, C_2>0$ such that $C_1 a_n \leq b_n \leq C_2 a_n$ for all $n \geq 1$. We may also sometimes use the same letter to denote a constant that varies from line to line.

\subsection{Linear inverse problems}\label{linear inverse problems}

The Gaussian white noise $Z$ in \eqref{eq1} is the iso-normal or iso-Gaussian process for $\H_2$. Since $Z$ is not realisable as a Gaussian random element of $\H_2$, we interpret the model in process form (as in \citep{BiHoMuRu}), that is we consider $Z = (Z_h : h \in \H_2)$ as a mean-zero Gaussian process with covariance $\E Z_h Z_{h'} = \langle h , h' \rangle_2$. In this form, \eqref{eq1} is interpreted as observing the Gaussian process $Y = (Y_h : h \in \H_2)$, where
\begin{equation*}
Y_h = \langle Af , h \rangle_2 + \frac{Z_h}{\sqrt{n}} .
\end{equation*}
It is statistically equivalent to observe the subprocess $(Y_{h_k} : k \in \mathbb{N})$, for any orthonormal basis $\{ h_k \}_{k \in \mathbb{N}}$ of $\H_2$. This corresponds to observing the sequence $( Y_{h_k}) $, where $Y_{h_k} $ are distributed as $N ( \langle Af , h_k \rangle_2 , n^{-1} )$ independently.

As is natural in inverse problems we consider bases $\{ e_k \}$ of $\H_1$ that diagonalize $A$. Denote by $A^*$ the adjoint of the operator $A$. If $A$ is a compact operator, then we can use the singular value decomposition (SVD) to obtain such a basis. Applying the spectral theorem to the compact self-adjoint operator $A^* A : \H_1 \rightarrow \H_1$, we know that $A^* A$ has a discrete spectrum consisting of positive eigenvalues $\{ \rho_k^2 \}_{k \in \mathbb{N}}$ (possibly together with 0) and a corresponding orthonormal basis $\{ e_k \}$ of $\H_1$ of eigenfunctions (see e.g. \citep{Ru}). We then have a conjugate orthonormal basis $\{ g_k \}$ of the range of $A$ in $\H_2$ defined by the equality $Ae_k = \rho_k g_k$. Letting $f_k := \langle f , e_k \rangle_1$, the action of $A$ on $f$ has a simple form when considered in this basis: $Af = A \left( \sum_k f_k e_k \right) = \sum_k \rho_k f_k g_k$. Writing $Y_k := Y_{g_k}$, \eqref{eq1} is statistically equivalent to observing the sequence $(Y_k)$ of independent observations, where $Y_k$ has distribution $N( \rho_k f_k , n^{-1} )$. The task of estimating $f$ thus reduces to that of estimating the sequence $\{ f_k \}$ from the sequence of independent observations $(Y_k )$.

Whilst priors based on a decomposition of $f$ in the $\{ e_k \}$ basis are frequently natural, it is often of interest to consider slightly more general types of bases. We therefore consider any basis whose elements consist of finite linear combinations of the $\{ e_k \}$.

\begin{condition} \label{Cond3}
Suppose that $\{ \phi_k \}$ is an orthonormal basis for $\H_1$ such that for each $k$, the set $\{ l : | \langle \phi_k , e_l \rangle_1 | \neq 0 \}$ is finite.
\end{condition}

\noindent This seemingly small extension actually has large implications for the possible choice of priors. For example, if the SVD is the Fourier basis (e.g. deconvolution - see Section \ref{deconvolution section} for more details), then Condition \ref{Cond3} corresponds to a band-limited basis. Band-limited wavelets have been used in the deconvolution setting (e.g. \citep{JoKePiRa,PeVi}), and this allows us to use the superior characterization properties of wavelets to create priors that model H\"older smoothness conditions rather than Sobolev smoothness conditions, which we do using periodized Meyer wavelets in Section \ref{wavelet}.

In any case, we shall assume the existence of such an orthonormal basis $\{ e_k \}$ of eigenvectors of $A^* A$, though we do not necessarily assume that $A$ is compact. The principle additional case we include is the white noise model, when $A$ is the identity operator. If $\rho_k \rightarrow 0$, the problem is ill-posed since the noise to signal ratio of the components tends to infinity as $k \rightarrow \infty$. Recovering $f$ from $Y$ is then an inverse problem. The severity of this ill-posedness can be characterized by the rate of decay of $\rho_k \rightarrow 0$; the faster this rate, the more difficult the estimation problem. We shall classify the problem using the following classes that are standard in the statistical literature.

\begin{customcondition}{(M)} \label{Cond1}
We say that the problem is mildly ill-posed with regularity $\alpha$ if
\begin{equation*}
C_1 (1 + k^2)^{-\alpha /2} \leq | \rho_k | \leq C_2 (1 + k^2)^{-\alpha/2} \quad \text{ as } k \rightarrow \infty
\end{equation*}
for some constants $C_1, C_2 > 0$ and $\alpha \geq 0$.
\end{customcondition}

\begin{customcondition}{(S)} \label{Cond2}
 We say that the problem is severely ill-posed with regularity $\beta$ if 
 \begin{equation*}
C_1 (1 + k^2)^{-\alpha_0 /2} e^{-c_0 k^\beta} \leq | \rho_k| \leq C_2 (1 + k^2)^{-\alpha_1 /2} e^{-c_0 k^\beta} \quad \text{ as } k \rightarrow \infty
\end{equation*}
for some constants $C_1, C_2, \beta > 0$ and $\alpha_0 , \alpha_1 \in \R$.
\end{customcondition}

\noindent The polynomial terms in Condition \ref{Cond2} are included to add flexibility, but do not characterize the problem since they are dominated by the exponential terms.

\subsection{Examples}\label{examples}

Note that if $\H_1 = L^2 ([0,1])$ and $\H_2 = H^1 ([0,1])$ then we can rewrite \eqref{eq1} in the more classical white noise form
\begin{equation*}
dY (t) = \tilde{A}f(t) dt + n^{-1/2} dW (t) ,
\end{equation*}
where $W$ is a standard Brownian motion on $[0,1]$ and $\tilde{A}f(t) = \frac{d}{dt} Af(t)$. In this setting, the direct case corresponds to taking $A$ to be the identity operator, so that $\tilde{A} f(t) = f'(t)$. Our results apply to the following situations amongst others (see \citep{Ca} for a general overview of inverse problems).

\subsubsection{Deconvolution}\label{deconvolution section}

A common problem in signal and image processing is periodic deconvolution (see e.g. \citep{JoKePiRa}). Consider the 1-dimensional case on the torus $\T = [0,1)$ and, assuming that $f$ is a 1-periodic function, define
\begin{equation}
A f(t) = \int_0^t f * \mu (s) ds, \quad \quad \quad t \in [0,1]  ,
\label{convolution}
\end{equation}
for some known finite signed measure $\mu$, where $f*\mu$ stands for convolution on $\T$ and where addition is defined modulo 1. This fits into the above framework since $\norm{f*\mu}_{L^2} \leq \norm{f}_{L^2} \norm{\mu}_{TV}$ by the Minkowski integral inequality and where $\norm{\cdot}_{TV}$ denotes the total variation norm for measures. For such a $\mu$, we can therefore consider $A$ as a map from $L^2([0,1])$ to $H^1 ([0,1])$. We observe $Y$ arising from the model $dY_t = f*\mu (t) dt + n^{-1/2} dW_t$, where $W$ is a standard Brownian motion on $[0,1]$. The SVD basis is the Fourier basis $e_k (x) = e^{2\pi i k x}$, $k \in \mathbb{Z}$, with associated eigenvalues given by the Fourier coefficients of $\mu$, namely $\rho_k = \hat{\mu}_k = \int_0^1 e_k (x) d\mu(x)$. The problem can be either mildly (e.g. \citep{JoKePiRa}) or severely ill-posed depending on the choice of measure $\mu$. Note that the Dirac measure $\delta_0$ is admissible under this model and corresponds to the direct observation case. This situation can be generalized to higher dimensions.

\subsubsection{Heat equation}

Consider the periodic boundary problem for the 1-dimensional heat equation
\begin{equation*}
\frac{\partial }{\partial t} u (x,t) =  \frac{\partial^2}{\partial x^2} u(x,t) , \quad \quad u(x,0) = f(x), \quad \quad u(0,t) = u(1,t)  ,
\end{equation*}
where $u : [0,1] \times [0,T] \rightarrow \mathbb{R}$ and the initial condition $f \in L^2 ([0,1])$ is 1-periodic. The task is to recover the initial condition $f$ from a noisy observation of $u$ at time $T$. The solution to this problem is given by
\begin{equation*}
u(x,T) = \sqrt{2} \sum_{k=1}^\infty f_k e^{-\pi^2 k^2 T} \sin (k \pi x) ,
\end{equation*}
where $f_k = \langle f , e_k \rangle_{L^2}$ with $e_k (x) = \sqrt{2} \sin (k \pi x)$. Thus we can express $u( \cdot , T) = A f$ with $\rho_k = e^{-\pi^2 k^2 T }$. Recovering $f$ from an observation $u( \cdot , T)$ corrupted by a white noise of intensity $n^{-1/2}$ thus leads to a severely ill-posed inverse problem satisfying Condition \ref{Cond2} with $\beta = 2$. This problem has been studied in the Bayesian context under conjugate Gaussian priors in \citep{KnVVVZ2}.

\subsubsection{Radon transform}

Another example is given by the Radon transform, which is used in computerized tomography (see \citep{JoSi} for more details). Let $D = \{ x \in \R^2 : \norm{x} \leq 1\}$ and suppose that $f : D \rightarrow \R$ is some function in $L^2 (D)$ (with Lebesgue measure) that we wish to estimate based on observations of the integrals of $f$ along all lines intersecting $D$. Parametrize the lines by the length $s\in [0,1]$ of their perpendicular from the origin and the angle $\varphi \in [0 , 2\pi)$ of the perpendicular to the $x$-axis. The Radon transform is defined as
\begin{equation*}
Af	(s , \varphi) = \frac{\pi}{2 \sqrt{1 - s^2} } \int_{-\sqrt{1-s^2}}^{\sqrt{1-s^2}} f( s \cos \varphi - t \sin \varphi, s \sin \varphi + t \cos \varphi ) dt ,
\end{equation*}
where $( s , \varphi) \in S = [0,1] \times [0, 2\pi)$. The Radon transform can be considered as a map $A: L^2 (D) \rightarrow L^2 (S , \mu) $, where $d\mu (s , \varphi) = 2 \pi^{-1} \sqrt{1-s^2}  \, ds \, d\varphi$ and consequently fits into the framework of \eqref{eq1}. Considered as such, $A$ is a bijective and bounded operator with SVD that can be computed using Zernike polynomials, leading to a mildly ill-posed problem satisfying Condition \ref{Cond1} with $\alpha = 1/2$ (see \citep{JoSi} for more details).

\subsection{The posterior distribution and other preliminaries}

In the non-conjugate situation, it is in general not possible to obtain a closed form expression for the posterior distribution. For $f \in \H_1$, let $\P_{f}$ denote the law of the model \eqref{eq1} so that $Y$ is an iso-Gaussian process with drift $Af$ under $\P_f$. Using the sequence space model, $\P_f$ is statistically equivalent to
\begin{equation*}
\bigotimes_{k=1}^\infty   N \left( \rho_k f_k , n^{-1} \right) .
\end{equation*}
Kakutani's product martingale theorem (c.f. Theorem 2.7 of \citep{Da}) shows that for any $f \in \H_1$, this measure is equivalent to $\bigotimes_{k=1}^\infty N(0 , n^{-1} )$ with affinity $\exp \left( -\frac{n}{8} \sum_k \rho_k^2 f_k^2 \right) > 0$. The family of distributions $ ( \P_f : f \in \H_1)$ is therefore dominated by the law $\P_0$ (denoting here the law of a pure white noise rather than the "true" law $\P_{f_0}$) with density
\begin{equation*}
\frac{d \P_f}{d\P_0}  = \exp \left(  \sqrt{n} \sum_{k=1}^\infty \rho_k f_k Z_k  - \frac{n}{2} \sum_{k=1}^\infty \rho_k^2 f_k^2  \right)   ,
\end{equation*}
where $Z_k = Z_{g_k}$. This is "almost" the Cameron-Martin theorem and if $Z$ were realizable as a Gaussian element in $\H_2$, then this expression would reduce to $\exp \left(  \sqrt{n} \langle Af , Z \rangle_2  - \frac{n}{2} \norm{Af}_2^2  \right)$. Since under $\P_0$, $Z_k = \sqrt{n} Y_k$, we can express the posterior distribution via Bayes' formula:
\begin{equation}
\Pi (B | Y ) = \frac{  \int_B  e^{ n \sum_k \rho_k f_k Y_k  - \frac{n}{2} \norm{Af}_2^2}  d\Pi (f) }{\int_\mathcal{P}  e^{  n \sum_k \rho_k f_k Y_k  - \frac{n}{2} \norm{Af}_2^2  }  d\Pi (f) }, \quad \quad \quad  B \in \mathcal{B},
\label{eq12}
\end{equation}
where $\mathcal{P}$ is the support of the prior $\Pi$. Obtaining an expression of this form for the posterior makes it possible to use the approach of Theorem 2.1 of \citep{GhGhVV}, a fact that we shall use implicitly in the proof of Theorem \ref{contraction thm}.

We shall classify the smoothness of functions via the Sobolev scales with respect to the basis $\{ e_k \}$. For $s \geq 0$ define
\begin{equation*}
H^s (\H_1) : = \left\{   f \in \H_1 :  \norm{f}_{H^s (\H_1)}^2 : = \sum_{k=1}^\infty f_k^2 (1+k^2)^s  < \infty  \right\}  ,
\end{equation*}
where $f_k = \langle f , e_k \rangle_1$. We shall generally omit reference to the underlying space $\H_1$ when there is no confusion possible. For $s > 0$ we define the dual space
\begin{equation*}
H^{-s} (\H_1) := ( H^s (\H_1) )^* .
\end{equation*}
It can be shown (Proposition 9.16 in \citep{Fo}) that the operator norm on $(H^s (\H_1))^*$ is equivalent to the $\norm{\cdot}_{H^{-s} (\H_1)}$-norm defined above (extended to negative indices), so that $H^{-s}$ consists exactly of the linear functionals $L$ acting on $H^s$ for which $\norm{L}_{H^{-s}}$ is finite. In particular, since every $f \in \H_1$ yields the continuous linear functional $g \mapsto \langle g , f \rangle_1 $ on $H^s$, we can consider $\H_1$ as a subspace of $H^{-s} (\H_1)$.

Note that this concept of smoothness is intrinsically linked to the operator $A$ through the choice of the basis $\{ e_k \}$. To be precise, the space $H^s$ should be indexed by both $\H_1$ and $A$, since it quantifies smoothness with respect to the operator $A$, but we omit this explicit link to simplify notation. For $\gamma > 0$, it is known \citep{Ca} that the minimax rate of estimation over any fixed ball of $H^\gamma$ is $n^{-\gamma / (2\alpha + 2\gamma + 1)}$ under Condition \ref{Cond1} and $ (\log n)^{-\gamma / \beta}$ under Condition \ref{Cond2}. Minimax rates are attained by a number of methods, such as generalized Tikhonov regularization amongst others \citep{BiHoMuRu,Ca}. In general, we shall use $\alpha$ and $\beta$ to refer to parameters quantifying the ill-posedness of the problem \eqref{eq1}, $\gamma$ to refer to the smoothness of the true function $f_0$ and $\delta$ to quantify the prior smoothness.

A key ingredient in proving contraction rates is establishing lower bounds for the small-ball probability of $Af$ about $Af_0$ (see \eqref{small} below). As mentioned above, if $A$ is compact then it changes the geometry of the problem by converting it into a small-ellipsoid problem in $\H_1$. Under Condition \ref{Cond1},
\begin{equation*}
\norm{Af}_2^2 = \norm{\sum_{k=1}^\infty \rho_k f_k e_k }_2^2 = \sum_{k=1}^\infty \rho_k^2 f_k^2 \leq C_2 \sum_{k=1}^\infty f_k^2 (1+k^2)^{-\alpha} = C_2 \norm{f}_{H^{-\alpha} }^2  ,
\end{equation*}
so that we are actually considering the small-ball probability of $f$ under the weaker negative Sobolev norm $H^{-\alpha}$, since the dimensions of the ellipsoid correspond to the singular values of $A$. To establish \eqref{small} in the mildly ill-posed case, it is therefore sufficient to prove
\begin{equation}
\Pi_n (f \in \mathcal{P} : C_2 \norm{f - f_0}_{H^{-\alpha} } \leq \varepsilon_n ) \geq e^{-Cn\varepsilon_n^2}.
\label{small2}
\end{equation}
In fact, the greater the ill-posedness of \eqref{eq1}, the greater the prior mass assigned to an $\H_2$-neighbourhood of $Af_0$, and consequently the "nicer" the geometry of the problem. As a concrete example, if $\{ e_k \}$ is the Fourier basis acting on the torus $\T = [0,1)$, then the singular values $\{ \rho_k \}$ act as Fourier multipliers and we recover the usual definition of (negative) Sobolev smoothness via Fourier series on $\T$. Using the same notion, Condition \ref{Cond2} induces an even weaker norm with exponential weighting.

\section{General contraction results}

To prove posterior contraction in a number of settings, we prove a general result along the lines of Theorems 2 and 3 of \citep{GiNi} adapted to inverse problems. We quantify the effects of the operator $A$ through a sequence of factors $\{ \delta_k \}$. Consider the set of indices
\begin{equation}
A_k = \{  l : | \langle \phi_m , e_l \rangle_1 | \neq 0 \text{ for some } 1 \leq m \leq k \}
\label{deltaset}
\end{equation}
and define
\begin{equation}
\delta_k = \inf_{i \in A_k} | \rho_i| ,
\label{delta}
\end{equation}
that is we take the smallest $\rho_i$ such that one of the first $k$ basis elements $\phi_1,...,\phi_k$ has a non-zero component in the $e_i$ direction. By Condition \ref{Cond3} and since $A$ is injective, we know that for any $k \in \mathbb{N}$, $A_k$ is finite and consequently $\delta_k > 0$ and the $\{ \delta_k \}$ form a decreasing sequence. Note that if we are working directly in the spectral basis $\{ e_k \}$ with the singular values $\{ \rho_k \}$ arranged in decreasing order, we simply recover $\delta_k = \rho_k$.

\begin{theorem}\label{contraction thm}
Consider the white noise model \eqref{eq1} and let $\{ \phi_k \}$ be an orthonormal basis of $\H_1$ satisfying Condition \ref{Cond3}. Let $\mathcal{P} \subset  \H_1$ and let $\Pi_n$ denote a sequence of priors defined on a $\sigma$-algebra of $\mathcal{P}$. Let $\varepsilon_n, \xi_n \rightarrow 0$ be sequences of positive numbers and $k_n \rightarrow \infty$ be a sequence of positive integers such that $\sqrt{n} \varepsilon_n \rightarrow \infty$ as $n \rightarrow \infty$,
\begin{equation}
k_n \leq c n \varepsilon_n^2 \quad \quad \quad \text{ and } \quad \quad \quad \frac{\varepsilon_n}{\delta_{k_n}} \leq C_1 \xi_n
\label{rate}
\end{equation}
for some $c,C_1 >0 $ and all $n \geq 1$, and where $\delta_k$ is defined by \eqref{delta} with respect to $\{ \phi_k \}$. Denote by $P_m$ the projection operator onto the linear span of $\{ \phi_k : 1 \leq k \leq m \}$ and let $\mathcal{P}_n$ be a sequence of subsets of
\begin{equation}
\{ f \in \mathcal{P} : \norm{P_{k_n} (f) - f}_1 \leq C_2 \xi_n \}
\end{equation}
for some $C_2 > 0.$ Moreover, assume that there exists $C>0$ such that, for sufficiently large $n$,
\begin{equation}
\Pi_n (\mathcal{P}_n^c ) \leq 	e^{-(C+4)n \varepsilon_n^2} ,
\label{bias}
\end{equation}
\begin{equation}
\Pi_n (f \in \mathcal{P} : \norm{Af- Af_0}_2 \leq \varepsilon_n ) \geq e^{-C n \varepsilon_n^2} .
\label{small}
\end{equation}
Suppose that $Y$ has law $\P_{f_0}$, where $f_0 \in \H_1$ is such that $\norm{P_{k_n} (f_0) - f_0 }_1 = O(\xi_n)$. Then there exists a constant $M < \infty$ such that
\begin{equation*}
\Pi_n (f \in \mathcal{P} : \norm{f - f_0}_1 \geq M \xi_n | Y ) \rightarrow 0
\end{equation*}
as $n \rightarrow \infty $ in $\P_{f_0}$-probability.
\end{theorem}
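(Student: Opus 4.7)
The overall plan is to apply the general posterior contraction framework of \citep{GhGhVV}, specialized to the Gaussian white noise structure of \eqref{eq1}. In this model the Kullback--Leibler divergence $K(\P_f,\P_{f_0})$ and its $L^2$-variation both equal $\tfrac{n}{2}\|A(f-f_0)\|_2^2$, so \eqref{small} is exactly the required KL small-ball assumption and \eqref{bias} is the sieve condition. What remains is to construct tests $\psi_n$ of $H_0:f=f_0$ against $\{f\in\mathcal{P}_n:\|f-f_0\|_1\geq M\xi_n\}$ whose type-I and type-II errors are at most $\exp(-(C+K)n\varepsilon_n^2)$ for any prescribed $K$, once $M$ is chosen large.

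Following \citep{GiNi}, I take $\psi_n$ to be a plug-in test based on a centred linear estimator built from the sequence model. Condition \ref{Cond3} writes each $\phi_k=\sum_{l\in A_k'}\langle\phi_k,e_l\rangle_1\, e_l$ as a \emph{finite} combination of SVD vectors, and $\rho_l^{-1}Y_{g_l}\sim N(f_l,(n\rho_l^2)^{-1})$ is an unbiased estimator of $f_l$. Setting
\begin{equation*}
\hat f_k=\sum_{l\in A_k'}\langle\phi_k,e_l\rangle_1\,\rho_l^{-1}Y_{g_l},\qquad \hat f_n=\sum_{k=1}^{k_n}\hat f_k\,\phi_k,
\end{equation*}
one checks $\E_f\hat f_n=P_{k_n}f$ and, using orthonormality of $\{\phi_k\}$ and $|\rho_l|\geq\delta_{k_n}$ for $l\in A_{k_n}$,
\begin{equation*}
\E_f\|\hat f_n-P_{k_n}f\|_1^2=\frac{1}{n}\sum_{k=1}^{k_n}\sum_{l\in A_k'}|\langle\phi_k,e_l\rangle_1|^2\rho_l^{-2}\leq\frac{k_n}{n\delta_{k_n}^2}\lesssim\xi_n^2
\end{equation*}
by the two parts of \eqref{rate}. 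Viewing $\hat f_n-P_{k_n}f$ as a linear function of the driving Gaussian noise, its Lipschitz constant from $\R^{|A_{k_n}|}$ into $(\H_1,\|\cdot\|_1)$ is at most $n^{-1/2}\delta_{k_n}^{-1}$, because $\sum_k|\langle\phi_k,\sum_l v_l e_l\rangle_1|^2\leq\sum_l v_l^2$ by Bessel's inequality; Borell--TIS then gives the sub-Gaussian tail
\begin{equation*}
\P_f\bigl(\|\hat f_n-P_{k_n}f\|_1>C'\xi_n+t\bigr)\leq \exp\bigl(-\tfrac12 n\delta_{k_n}^2 t^2\bigr),
\end{equation*}
whose exponent is $\leq -n\varepsilon_n^2 t^2/(2C_1^2\xi_n^2)$ thanks to $\xi_n\delta_{k_n}\geq\varepsilon_n/C_1$.

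Take $\psi_n=\mathbf{1}\{\|\hat f_n-f_0\|_1>M\xi_n/2\}$. The type-I error is controlled by the above display with $f=f_0$ combined with the assumption $\|P_{k_n}f_0-f_0\|_1=O(\xi_n)$; uniformly over $\{f\in\mathcal{P}_n:\|f-f_0\|_1\geq M\xi_n\}$, the type-II error is bounded analogously using the triangle inequality and $\|P_{k_n}f-f\|_1\leq C_2\xi_n$ on $\mathcal{P}_n$. For $M$ large both become at most $\exp(-(C+4)n\varepsilon_n^2)$. Plugging these tests into the Bayes formula \eqref{eq12} in the standard manner---lower bounding the denominator by $e^{-(C+2)n\varepsilon_n^2}$ on a $\P_{f_0}$-event of probability tending to one via \eqref{small}, discarding $\mathcal{P}_n^c$ via \eqref{bias}, and bounding the restricted expected numerator by Fubini together with the uniform type-II estimate---yields the claimed contraction.

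I expect the principal obstacle to be the concentration step: in the \emph{generalized} basis $\{\phi_k\}$ the relevant Borell--TIS Lipschitz constant must be sharply controlled by $\delta_{k_n}^{-1}$ rather than by a crude sum of $\delta_k^{-1}$ over $k\leq k_n$. This is exactly what the finiteness of each $A_k$ in Condition \ref{Cond3}, together with the definition \eqref{delta} of $\delta_k$ as an infimum over the enlarged index set, are designed to deliver; without this sharpness the clean link $\varepsilon_n/\delta_{k_n}\lesssim\xi_n$ would degrade and the tests would fail to attain the required exponential rate.
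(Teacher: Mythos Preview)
Your proposal is correct and follows essentially the same route as the paper: the same plug-in linear estimator $\hat f_n$ with $\E_f\hat f_n=P_{k_n}f$, the same variance bound $k_n/(n\delta_{k_n}^2)$, the same Gaussian concentration to obtain the key exponential inequality $\P_f(\|\hat f_n-P_{k_n}f\|_1\ge M\varepsilon_n/\delta_{k_n})\le e^{-Ln\varepsilon_n^2}$, and then the same plug-in test fed into the \citep{GhGhVV} machinery via \eqref{eq12}. The only cosmetic differences are that the paper phrases the concentration step as Borell's inequality for the supremum of the Gaussian process $h\mapsto\langle h,\hat f_n-P_{k_n}f\rangle_1$ over the unit ball (bounding the weak variance $\sigma^2\le (n\delta_{k_n}^2)^{-1}$), whereas you phrase it equivalently via the Lipschitz constant of the noise-to-estimator map; and your identification of the KL variance as $\tfrac n2\|A(f-f_0)\|_2^2$ is off by a factor of $2$, which is harmless for the small-ball step.
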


In an analogy to the frequentist approach, the quantity $\varepsilon_n / \delta_{k_n}$ in \eqref{rate} represents the variance term of the centred linear estimator used to test \eqref{eq2}, while $\xi_n$ represents its bias. In the mildly ill-posed setting of Condition \ref{Cond1}, the optimal outcome is to balance these terms so that \eqref{rate} is an equality (up to constants). Taking $k_n \simeq n \varepsilon_n^2$ gives the optimal result using this method, yielding rate $\xi_n \simeq n^\alpha \varepsilon_n^{2\alpha +1}$.

In the severely ill-posed setting of Condition \ref{Cond2} it is known (see \citep{BuTs} for the case of density deconvolution) that the bias strictly dominates the variance as long as the true function is "rougher" than the operator $A$. By this we mean that if $f_0$ strictly falls within some Sobolev class, or satisfies some weaker analytic condition than Condition \ref{Cond2}, then $\xi_n$ will be of strictly larger order than $\varepsilon_n / \delta_{k_n}$ so that \eqref{rate} will be a strict inequality (which must be verified in practice) and we take $k_n = o( n \varepsilon_n^2)$ as $n \rightarrow \infty$. Since our method relies on the approximation properties of the prior, the prior bias is equally important as the true bias in determining the contraction rate in this case.

\section{Main results}\label{main}

We analyse the contraction properties of a number of priors in the inverse problem setting under the assumption that $Y$ has law $\P_{f_0}$ for some unknown $f_0 \in \H_1$.

\subsection{Sieve priors}\label{sieve}

Consider a sieve prior in the orthonormal basis $\{ e_k \}$ that diagonalizes the operator $A^* A$. We take
\begin{equation}
f = \sum_{k=1}^M f_k e_k  ,
\label{eq9}
\end{equation}
where $M$ has probability mass function $h$ on $\mathbb{N}$ with distribution function $H$. We take the $\{ f_k \}$ to be independent (real or complex as required) random variables with density $\tau_k^{-1} q (\tau_k^{-1} \cdot )$, for some sequence $\{ \tau_k \}$ to be specified below, and for $q$ some fixed density. The prior can thus be expressed as
\begin{equation*}
\Pi = \sum_{m=1}^\infty  h(m) 	\Pi_m ,
\end{equation*}
where $\Pi_m (x_1,...,x_m) = \prod_{k=1}^m \frac{1}{\tau_k} q \left( \frac{x_k}{\tau_k} \right)$. Priors of this form have been studied (e.g. \citep{ShWa,Zh}) and, under suitable conditions on $h$ and $\Pi_m$, are adaptive over Sobolev smoothness classes in the non ill-posed case \citep{ArGaRo,Hu}. Upon suitable calibration of the prior with respect to $A$, this adaptation property extends to the ill-posed case when considered over the classes $H^\gamma (\H_1)$ for $\gamma > 0$. We firstly make the following assumption on $q$.

\begin{condition}\label{Cond5}
The density $q: \R (\text{or } \mathbb{C}) \rightarrow [0,\infty)$ satisfies
\begin{equation*}
D e^{-d |x|^w} \leq q(x)
\end{equation*}
for all $x \in \R$ (or $\mathbb{C}$) and some constants $D,d>0$ and $w \geq 1$.
\end{condition}

Condition \ref{Cond5} is very mild and requires only that $q$ is is supported on the whole of $\R$ (or $\mathbb{C}$) and does not decay faster than any exponentiated polynomial; this includes many standard densities, such as the Gaussian, Laplace, Cauchy and Student's t-distributions. Our first result shows that if the true parameter is actually of the form \eqref{eq9}, then in the mildly ill-posed case we recover a $\sqrt{n}$-rate up to a logarithmic factor.

\begin{proposition}\label{sieve1}
Suppose that $A$ satisfies Condition \ref{Cond1} with regularity $\alpha$ and that the true function $f_0$ is a finite series in the $\{ e_k \}$-basis. Let $0 < h (m) \leq B e^{-bm}$ for some constants $B, b > 0$ and all $m \in \mathbb{N}$ and suppose that the density $q$ satisfies Condition \ref{Cond5} for some $w \geq 1$. Then for a sufficiently large constant $C>0$,
\begin{equation*}
\Pi \left(  f \in \H_1 : \norm{f - f_0}_1 > C  \left. \frac{( \log n )^{\alpha + 1/2} }{\sqrt{n}}  \right| Y \right) \rightarrow 0
\end{equation*}
in $\P_{f_0}$-probability as $n \rightarrow \infty$.
\end{proposition}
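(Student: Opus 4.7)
The plan is to apply Theorem \ref{contraction thm} directly in the SVD basis, taking $\phi_k = e_k$ so that $\delta_k = \rho_k \asymp k^{-\alpha}$ under Condition \ref{Cond1}. Set
$$ \varepsilon_n^2 = L (\log n)/n, \qquad k_n = \lceil K \log n \rceil, \qquad \xi_n = M (\log n)^{\alpha+1/2}/\sqrt{n}, $$
with constants $L,K,M$ to be fixed. Then $k_n \leq c n \varepsilon_n^2$ is immediate, and $\varepsilon_n / \delta_{k_n} \asymp \sqrt{(\log n)/n}\,k_n^\alpha \asymp (\log n)^{\alpha+1/2}/\sqrt{n} \leq C_1 \xi_n$ for a suitable $M$. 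Since $f_0$ is a finite series, $P_{k_n}(f_0) = f_0$ for all sufficiently large $n$, so the hypothesis $\|P_{k_n}(f_0) - f_0\|_1 = O(\xi_n)$ is automatic.

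For the sieve take $\mathcal{P}_n = \{ f = \sum_{k=1}^m f_k e_k : m \leq k_n \}$, on which $P_{k_n}(f) = f$; this makes the prior bias condition hold trivially. The remaining mass bound reduces to a tail estimate on $M$: the assumption $h(m) \leq Be^{-bm}$ gives
$$ \Pi_n (\mathcal{P}_n^c) = P(M > k_n) \leq B' e^{-b k_n}, $$
which is bounded by $e^{-(C+4) n \varepsilon_n^2}$ as soon as $bK \geq L(C+4)$, where $C$ is the constant determined by the small-ball step below.

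The one non-trivial step is the small-ball lower bound. Let $K_0 = \max\{ k : \langle f_0 , e_k \rangle_1 \neq 0 \}$, a fixed finite integer. Conditioning on $M = K_0$, which has positive probability $h(K_0) > 0$, gives $Af - Af_0 = \sum_{k=1}^{K_0} \rho_k (f_k - f_{0,k}) g_k$ and hence $\|Af - Af_0\|_2^2 = \sum_{k=1}^{K_0} \rho_k^2 (f_k - f_{0,k})^2$. A sufficient condition for this to be at most $\varepsilon_n^2$ is $|f_k - f_{0,k}| \leq \varepsilon_n/(\rho_k \sqrt{K_0})$ for each $k \leq K_0$. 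By independence and the lower bound of Condition \ref{Cond5}, the density $\tau_k^{-1} q(\cdot/\tau_k)$ is bounded below by a positive constant depending on $f_{0,k},\tau_k,K_0$ on any fixed compact neighborhood of $f_{0,k}$, so each such probability is $\gtrsim \varepsilon_n$. Taking their product yields
$$ \Pi_n ( \|Af - Af_0\|_2 \leq \varepsilon_n ) \gtrsim h(K_0)\, \varepsilon_n^{K_0} \geq e^{-C n \varepsilon_n^2} $$
for $C$ large enough, since $n \varepsilon_n^2 \asymp \log n$ while the left side is only polynomial in $n^{-1/2}$.

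The main (if modest) obstacle is coordinating the constants in the correct order: the small-ball estimate fixes $C$ in terms of $K_0$, $L$ and the prior parameters, after which one chooses $K$ (the coefficient of $\log n$ in $k_n$) large enough that the sieve-complement term is dominated by $e^{-(C+4) n \varepsilon_n^2}$. With these choices all hypotheses of Theorem \ref{contraction thm} are satisfied, yielding contraction in $\|\cdot\|_1$ at rate $\xi_n \asymp (\log n)^{\alpha+1/2}/\sqrt{n}$, as claimed.
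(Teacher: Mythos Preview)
Your proposal is correct and follows essentially the same route as the paper's proof: work in the SVD basis, take $\varepsilon_n \asymp (\log n/n)^{1/2}$ and $k_n \asymp \log n$, use the finite-series sieve $\mathcal{P}_n$ so the bias condition is trivial, bound $\Pi(\mathcal{P}_n^c)$ via the exponential tail of $h$, and establish the small-ball bound by conditioning on $M = K_0$ and using the density lower bound coordinate-wise to get a bound of order $\varepsilon_n^{K_0}$. The only cosmetic difference is that the paper passes through the $H^{-\alpha}$-norm inequality $\|Af\|_2 \lesssim \|f\|_{H^{-\alpha}}$ before reducing to coordinates, whereas you work directly with $\|Af - Af_0\|_2$; this changes nothing of substance.
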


When the true regression function is not exactly of this form, we naturally expect a nonparametric rate of convergence. The next result deals with the case where we consider a general function lying in some Sobolev class $H^\gamma$, $\gamma > 0$. We introduce a parameter $\gamma_0 \leq \gamma$ that represents a known a-priori lower bound on the unknown smoothness and allows use of a more tightly concentrated prior. Note that the choice $\gamma_0 = 0$ is valid in the following theorem and so a non-trivial lower bound is not necessarily assumed.

\begin{proposition}\label{sieve3}
Suppose that the true function $f_0$ is in $H^\gamma (\H_1)$ for some $\gamma > 0$ and that $A$ satisfies Condition \ref{Cond1} with regularity $\alpha$. Consider the prior $\Pi$ described above with $B_1 e^{-b_1 m} \leq  h (m) \leq B_2 e^{-b_2 m}$ for all $m \in \mathbb{N}$, for some constants $B_1, B_2, b_1, b_2 > 0$, and with density $q$ satisfying Condition \ref{Cond5} for some $w \geq 1$. Suppose moreover that the scale parameters satisfy $B_3 (1+k^2)^{-\gamma_0 /2} (\log k)^{-1/w} \leq \tau_k \leq B_4 (1+k^2)^{(\alpha +1)/2}$ for some $B_3,B_4 >0$ and $\gamma_0 \leq \gamma$. Then for a sufficiently large constant $C>0$,
\begin{equation*}
\Pi \left(  f \in \H_1 : \norm{f - f_0}_1 > C  \left. \frac{( \log n )^{\eta} }{n^{\gamma / (2\alpha + 2\gamma +1)}}  \right| Y \right) \rightarrow 0
\end{equation*}
in $\P_{f_0}$-probability as $n \rightarrow \infty$, where $\eta = \frac{(2\alpha + 1)(\alpha + \gamma)}{2\alpha + 2\gamma +1}$.
\end{proposition}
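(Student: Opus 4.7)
The plan is to invoke Theorem \ref{contraction thm} in the SVD basis $\phi_k = e_k$, so that Condition \ref{Cond3} holds automatically and $\delta_k = |\rho_k| \simeq (1+k^2)^{-\alpha/2}$. I saturate the two inequalities in \eqref{rate} simultaneously by taking $k_n = \lfloor c n \varepsilon_n^2 \rfloor$ and choosing $\varepsilon_n$ so that $\varepsilon_n k_n^\alpha \simeq \xi_n$; solving gives
\[ \varepsilon_n \simeq (n/\log n)^{-(\alpha+\gamma)/(2\alpha+2\gamma+1)}, \qquad k_n \simeq n^{1/(2\alpha+2\gamma+1)}(\log n)^{2(\alpha+\gamma)/(2\alpha+2\gamma+1)}, \]
which reproduces the stated rate $\xi_n$ with $\eta = (2\alpha+1)(\alpha+\gamma)/(2\alpha+2\gamma+1)$. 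The conditions $\sqrt{n}\varepsilon_n \to \infty$ and $\norm{P_{k_n}f_0 - f_0}_1^2 \leq k_n^{-2\gamma}\norm{f_0}_{H^\gamma}^2 = o(\xi_n^2)$ follow immediately from $f_0 \in H^\gamma$.

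For the sieve condition \eqref{bias}, take $\mathcal{P}_n = \{f : M \leq K_n\}$ with $K_n = \lceil (C+4)n\varepsilon_n^2/b_2\rceil$ and choose $c$ in the definition of $k_n$ large enough that $K_n \leq k_n$. Then $P_{k_n}f = f$ for every $f \in \mathcal{P}_n$, so the approximation condition is trivial, and the geometric tail of $h$ yields
$\Pi_n(\mathcal{P}_n^c) \leq B_2 e^{-b_2 K_n}/(1-e^{-b_2}) \leq e^{-(C+4)n\varepsilon_n^2}$ for $n$ large.

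The heart of the proof is the small-ball bound \eqref{small}. Fix $m^* \simeq \varepsilon_n^{-1/(\alpha+\gamma)} \simeq (n/\log n)^{1/(2\alpha+2\gamma+1)}$, so that the truncation error $\sum_{k > m^*}\rho_k^2 f_{0,k}^2 \leq C_2^2 \norm{f_0}_{H^\gamma}^2 (m^*)^{-2(\alpha+\gamma)} \leq \varepsilon_n^2/2$. Conditioning on $M = m^*$ loses a factor $\Pi_n(M = m^*) \geq B_1 e^{-b_1 m^*}$, which is $\geq e^{-C'n\varepsilon_n^2}$ because the logarithmic inflation of $\varepsilon_n$ was chosen precisely so that $m^*\log n \simeq n\varepsilon_n^2$ (in particular $m^* = o(n\varepsilon_n^2)$). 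Given $M = m^*$, bound
$\P(\sum_{k \leq m^*}\rho_k^2(f_k - f_{0,k})^2 \leq \varepsilon_n^2/2)$ from below by imposing $|f_k - f_{0,k}| \leq a_k$ with $a_k = c_a\varepsilon_n k^\alpha/\sqrt{m^*}$ tuned so $\sum_k \rho_k^2 a_k^2 \leq \varepsilon_n^2/2$. Condition \ref{Cond5} gives the product bound
\[ \prod_{k=1}^{m^*}\P(|f_k - f_{0,k}| \leq a_k) \;\geq\; \prod_{k=1}^{m^*} 2 a_k D \tau_k^{-1} e^{-d((|f_{0,k}|+a_k)/\tau_k)^w}. \]
The entropy-type sum $\sum_{k=1}^{m^*}\log(a_k/\tau_k)$ is bounded below by $-C_0 m^*\log n \simeq -C_0 n\varepsilon_n^2$ using the upper bound $\tau_k \leq B_4(1+k^2)^{(\alpha+1)/2}$ and Stirling, while the exponent $\sum_{k=1}^{m^*}((|f_{0,k}|+a_k)/\tau_k)^w$ is $O(n\varepsilon_n^2)$ using $|f_{0,k}| \leq \norm{f_0}_{H^\gamma}k^{-\gamma}$, the lower bound $\tau_k \geq B_3(1+k^2)^{-\gamma_0/2}(\log k)^{-1/w}$, and $\gamma_0 \leq \gamma$. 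Combined, $\log\Pi_n(\norm{Af - Af_0}_2 \leq \varepsilon_n) \gtrsim -Cn\varepsilon_n^2$.

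The main obstacle is this small-ball computation, where two logarithmic corrections must fit together: the inflation in $\varepsilon_n$ is forced by $m^*\log n \simeq n\varepsilon_n^2$, while the slack $(\log k)^{-1/w}$ in the lower bound on $\tau_k$ is what controls the exponent term in the border case $\gamma_0 = \gamma$ for heavy-tailed densities $q$ (small $w$). Balancing these is what produces the exponent $\eta = (2\alpha+1)(\alpha+\gamma)/(2\alpha+2\gamma+1)$, rather than the larger naive value $(2\alpha+1)/2$ that a crude bias--variance balance would suggest.
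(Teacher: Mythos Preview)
Your proposal is correct and follows essentially the same route as the paper: you work in the SVD basis, set $\varepsilon_n \simeq (\log n/n)^{(\alpha+\gamma)/(2\alpha+2\gamma+1)}$ and $k_n \simeq n\varepsilon_n^2$, take $\mathcal{P}_n$ to be the set of truncated series so that \eqref{bias} is trivial, and establish \eqref{small} by conditioning on $M = m^* \simeq \varepsilon_n^{-1/(\alpha+\gamma)}$ (the paper's $j_n$) and applying the coordinate-wise lower bound from Condition~\ref{Cond5}. The roles of the upper and lower bounds on $\tau_k$ in controlling the log-sum and the exponent-sum respectively are exactly as in the paper, and the resulting bound $e^{-Cm^*\log m^*} \simeq e^{-Cn\varepsilon_n^2}$ matches. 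Your closing commentary on the exponent $\eta$ is interpretive rather than technical, and the remark about ``heavy-tailed densities $q$ (small $w$)'' is a little imprecise, but neither affects the argument.
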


We firstly note that this prior gives a fully adaptive convergence rate over all the Sobolev classes $H^\gamma$ up to a logarithmic factor, with this rate being uniform over $f_0$ in balls in $H^\gamma$. Expressed in classical regularization terminology, we have that the rate does not saturate as the truth becomes smoother.

It is worth commenting on the bounds needed on $\{ \tau_k \}$, both of which are used to establish the small-ball condition \eqref{small}, and which depend on the operator $A$ and the lower bound $\gamma_0$. Note that the choices $\tau_k \equiv \tau$ for all $k$, corresponding to the $\{ f_k \}$ being i.i.d., or decaying coefficients $\tau_k \asymp ( \log k)^{-1/w}$ both satisfy the conditions of Proposition \ref{sieve3} and require no assumptions on the unknown smoothness. The requirements on $\{ \tau_k \}$ are therefore no real imposition, merely adding flexibility when calibrating the prior, and the resulting procedure is truly rate adaptive. The lower bound reflects that the prior cannot (up to a logarithmic factor) pick coefficients that decay faster than those of $f_0$. If a non-trivial lower bound $\gamma_0 >0$ is a-priori known, then smoothing the prior to incorporate this information would yield a more concentrated prior, thereby reducing the size of credible sets whilst not affecting the rate. The upper bound is extremely mild and actually allows the size of the components to increase with $k$. It ensures that the moments of $(Af)_k$ (assuming they exist) are $O(1)$ as $k \rightarrow \infty$, so that the prior component moments cannot grow faster than the operator $A$ can regularize them, thus allowing the use of larger variances than would be possible in the direct case ($\alpha = 0$). The conditions on $h$ require it to be of exponential type and are needed both to control the prior mass for the bias condition \eqref{bias} and to establish the small-ball condition \eqref{small}. They are of the same form as in the direct case (c.f. Condition $A_5$ of \citep{ArGaRo}).

When working in the severely ill-posed case, we must calibrate our prior to the degree of ill-posedness (i.e. the parameter $\beta$). When the true parameter is a finite series in the $\{ e_k \}$ basis, we again recover a $\sqrt{n}$-rate up to some strictly subpolynomial factor that grows more quickly than the logarithmic factor arising in the mildly ill-posed case in Proposition \ref{sieve1}.

\begin{proposition}\label{sieve2}
Suppose that $A$ satisfies Condition \ref{Cond2} and that the true function $f_0$ is a finite series in the $\{ e_k \}$-basis. Suppose that $q$ satisfies Condition \ref{Cond5} for some $w \geq 1$, let $h(m) > 0$ for all $m \in \mathbb{N}$ and suppose that $1 - H(m) \lesssim e^{-bm^{\beta + 1}}$ as $m \rightarrow \infty$ for some constant $b$. Then for a sufficiently large constant $C>0$,
\begin{equation*}
\Pi \left(  f \in \H_1 : \norm{f - f_0}_1 > C  \left. \frac{ w_n }{\sqrt{n}}  \right| Y \right) \rightarrow 0
\end{equation*}
in $\P_{f_0}$-probability as $n \rightarrow \infty$, where $w_n = ( \log n )^\frac{2\alpha_0 + \beta +1}{2 (\beta +1) } \exp \left( c (\log n \right)^\frac{\beta}{\beta+1} )$ grows more slowly than any power of $n$.
\end{proposition}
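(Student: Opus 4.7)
The strategy is to apply Theorem \ref{contraction thm} with the basis $\{\phi_k\}=\{e_k\}$, so that Condition \ref{Cond3} holds trivially and $\delta_k=|\rho_k|$. Take $\mathcal{P}_n=\{f\in\mathcal{P}:M\leq k_n\}$; on $\mathcal{P}_n$ we have $P_{k_n}(f)=f$, and $P_{k_n}(f_0)=f_0$ once $k_n$ exceeds the (finite) highest index $K_0$ of $f_0$. Thus both in-support approximation requirements are automatic, and the work reduces to calibrating $\varepsilon_n$, $k_n$, $\xi_n$ so that \eqref{rate}, \eqref{bias}, \eqref{small} all hold simultaneously.

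Choose $\varepsilon_n^2 = L(\log n)/n$, $k_n=\lceil K(\log n)^{1/(\beta+1)}\rceil$ and $\xi_n=w_n/\sqrt{n}$, for constants $L,K$ to be tuned. Then $\sqrt{n}\varepsilon_n=\sqrt{L\log n}\to\infty$, and since $1/(\beta+1)<1$ the first part of \eqref{rate}, $k_n\leq cn\varepsilon_n^2$, holds for large $n$. By the lower bound in Condition \ref{Cond2},
\begin{equation*}
\frac{\varepsilon_n}{\delta_{k_n}} \lesssim \sqrt{\frac{\log n}{n}}\,(1+k_n^2)^{\alpha_0/2} e^{c_0 k_n^\beta} \simeq \frac{(\log n)^{(2\alpha_0+\beta+1)/(2(\beta+1))} \exp\bigl(c_0 K^\beta (\log n)^{\beta/(\beta+1)}\bigr)}{\sqrt{n}},
\end{equation*}
so choosing $c\geq c_0 K^\beta$ in the definition of $w_n$ yields $\varepsilon_n/\delta_{k_n}\leq C_1\xi_n$.

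The bias hypothesis \eqref{bias} follows from the tail condition on $h$: $\Pi_n(\mathcal{P}_n^c) = 1-H(k_n)\lesssim e^{-bk_n^{\beta+1}} \leq n^{-bK^{\beta+1}}$, which is dominated by $e^{-(C+4)n\varepsilon_n^2}=n^{-(C+4)L}$ provided $K$ is chosen large enough in terms of $C,L,b$. For the small-ball \eqref{small}, restrict to $\{M=K_0\}$, which has probability $h(K_0)>0$; then
\begin{equation*}
\|Af-Af_0\|_2^2 = \sum_{k=1}^{K_0} \rho_k^2 (f_k - f_{0,k})^2,
\end{equation*}
and requiring $|f_k-f_{0,k}|\leq \varepsilon_n/(\sqrt{K_0}|\rho_k|)$ for $k=1,\dots,K_0$ suffices. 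The exponential lower bound on $q$ in Condition \ref{Cond5} implies each such event has probability $\gtrsim\varepsilon_n$, with constants depending only on $f_0$, $K_0$ and the finitely many $\rho_k,\tau_k$ with $k\leq K_0$. Hence
\begin{equation*}
\Pi(\|Af-Af_0\|_2\leq\varepsilon_n) \gtrsim \varepsilon_n^{K_0} \gtrsim (\log n/n)^{K_0/2},
\end{equation*}
whose logarithm is $-(K_0/2+o(1))\log n \geq -CL\log n = -Cn\varepsilon_n^2$ as soon as $L$ is taken large enough that $CL>K_0/2$.

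The main obstacle is precisely this three-way balancing of constants. The finite-dimensional small-ball bound forces $n\varepsilon_n^2\gtrsim \log n$ (driven by the dimension $K_0$), while the stretched-exponential tail condition $1-H(m)\lesssim e^{-bm^{\beta+1}}$ forces $k_n^{\beta+1}\gtrsim n\varepsilon_n^2$; these two jointly pin the scale $k_n \simeq (\log n)^{1/(\beta+1)}$. The severe ill-posedness in Condition \ref{Cond2} then converts $\varepsilon_n/\delta_{k_n}$ into the characteristic factor $\exp(c(\log n)^{\beta/(\beta+1)})$ in $w_n$, with the polynomial log factor $(\log n)^{(2\alpha_0+\beta+1)/(2(\beta+1))}$ arising from the $\sqrt{\log n}$ in $\varepsilon_n$ combined with the $k_n^{\alpha_0}$ prefactor in $1/\delta_{k_n}$.
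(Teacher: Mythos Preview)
Your proposal is correct and follows essentially the same approach as the paper: verify Theorem \ref{contraction thm} with $\phi_k=e_k$, $\mathcal{P}_n=\{M\le k_n\}$, $\varepsilon_n\simeq\sqrt{(\log n)/n}$ and $k_n\simeq(\log n)^{1/(\beta+1)}$, using the finite-dimensional small-ball argument (as in Proposition \ref{sieve1}) and the tail bound on $H$ for \eqref{bias}. The only cosmetic difference is that you make the order of choosing the constants $C$, $L$, $K$ explicit, whereas the paper fixes $\varepsilon_n=\sqrt{(\log n)/n}$ first and then refers back to the proof of Proposition \ref{sieve1} for the small-ball step.
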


Since the bias strictly dominates the variance in the severely ill-posed case, the bias resolution level $k_n$ grows more slowly than the balancing term $n \varepsilon_n^2$ in \eqref{rate} (which is a strict inequality). This reduces the size of the approximating sets $\mathcal{P}_n$ in Theorem \ref{contraction thm}, so that we need a sharper control on the tail of the distribution $H$ of $M$ to establish the bias condition \eqref{bias}. Since we take $k_n \simeq (\log n)^{1/\beta}$ to account the second part of \eqref{rate}, we must calibrate $H$ according to the ill-posedness of the problem; indeed the more difficult the problem (larger $\beta$) the thinner tails we require.

From a frequentist perspective, it is entirely reasonable to calibrate the prior according to the inverse problem, since the operator $A$ is assumed known. While from a pure Bayesian perspective this may seem unduly restrictive, the dependence of the prior on the ill-posedness factor $\beta$ seems reasonable in this instance, given that the prior already makes implicit use of knowledge of the operator $A$ through the choice of a diagonalizing basis. To the best of our knowledge, the Bayesian procedures thus far analysed from a frequentist perspective in both the mildly and severely ill-posed settings \citep{KnSzVVVZ,KnVVVZ,KnVVVZ2} all make strong use of knowledge of $A$ through the choice of diagonalizing basis.

In the the general case where $f_0 \in H^\gamma$, the dominating behaviour of the bias means we need a more careful control of the approximation error. We therefore assume that the density $q$ is a standard Gaussian distribution. Note that $\delta$ in the following proposition corresponds to the Sobolev smoothness of a prior element.

\begin{proposition}\label{sieve4}
Suppose that the true function $f_0$ is in $H^\gamma (\H_1)$ for some $\gamma > 0$ and that $A$ satisfies Condition \ref{Cond2}. Suppose that the prior $\Pi$ satisfies $h (m) \geq B_1 e^{-b m^{\beta+1}}$ for all $m \geq 1$ and that $1 - H(m) \leq B_2 \exp (-b m^{\beta+1} )$ as $m \rightarrow \infty$ for some constants $B_1, B_2, b >0$. Suppose moreover that the density $q$ is standard Gaussian and that the scale parameters satisfy $\tau_k = (1+k^2)^{-\delta/2 - 1/4}$ for some $\delta > \beta /2$. Then for a sufficiently large constant $C>0$,
\begin{equation*}
\Pi \left(  f \in \H_1 : \norm{f - f_0}_1 > C  \left. \left( \log n \right)^{-\frac{ (\delta - \beta /2) \wedge \gamma  }{\beta}}  \right| Y \right) \rightarrow 0
\end{equation*}
in $\P_{f_0}$-probability as $n \rightarrow \infty$.
\end{proposition}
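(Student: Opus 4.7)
The plan is to apply Theorem \ref{contraction thm} with $\xi_n = (\log n)^{-s/\beta}$ for $s = (\delta - \beta/2)\wedge\gamma$, taking $k_n = (c \log n)^{1/\beta}$ for a constant $c$ to be chosen, sieve $\mathcal{P}_n = \{f \in \mathcal{P}: M \leq k_n\}$, and $\varepsilon_n$ of order $\sqrt{(\log n)^{(\beta+1)/\beta}/n}$. This reflects the severely ill-posed setting in which the bias strictly dominates the variance.

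The approximation bound $\norm{P_{k_n} f_0 - f_0}_1 \lesssim k_n^{-\gamma}\norm{f_0}_{H^\gamma} \lesssim (\log n)^{-\gamma/\beta} \leq \xi_n$ follows immediately from $f_0 \in H^\gamma$ and $s \leq \gamma$. For the variance, $\delta_{k_n} \asymp k_n^{-\alpha_0} e^{-c_0 k_n^\beta} = k_n^{-\alpha_0} n^{-c_0 c}$, so $\varepsilon_n/\delta_{k_n}$ is of order $n^{c_0 c - 1/2}$ up to polylogarithmic factors, negligible compared with $\xi_n$ whenever $c_0 c < 1/2$. The sieve mass $\Pi(M > k_n) \leq B_2 \exp(-bc^{(\beta+1)/\beta}(\log n)^{(\beta+1)/\beta})$ is at most $e^{-(C+4)n\varepsilon_n^2}$ whenever $b c^{(\beta+1)/\beta}$ exceeds $(C+4)$ times the implicit constant in $n\varepsilon_n^2$.

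The principal work lies in the small-ball condition $\Pi(\norm{Af - Af_0}_2 \leq \varepsilon_n) \geq e^{-Cn\varepsilon_n^2}$. I would lower bound the mixture prior by its contribution from $M = m_0$, where $m_0 \asymp (\log(1/\varepsilon_n))^{1/\beta}$ is chosen so that the deterministic tail $\sum_{k > m_0} \rho_k^2 f_{0,k}^2$ is at most $\varepsilon_n^2/4$. Conditional on $M = m_0$ the prior is Gaussian on the span of $e_1, \ldots, e_{m_0}$, and the shifted small-ball at $P_{m_0}(Af_0)$ is controlled via Cameron--Martin: its logarithm is at least $-\frac{1}{2}\norm{P_{m_0} f_0}_{\mathrm{CM}}^2 + \log P(\norm{Af}_2 \leq \varepsilon_n/2 \mid M = m_0)$. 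Interpolation using $f_0 \in H^\gamma$ and $\tau_k^{-2} = (1+k^2)^{\delta+1/2}$ gives the Cameron--Martin cost $\norm{P_{m_0} f_0}_{\mathrm{CM}}^2 = \sum_{k=1}^{m_0} f_{0,k}^2(1+k^2)^{\delta+1/2} \lesssim m_0^{(2\delta+1-2\gamma)\vee 0}$; the classical small-ball estimate for a Gaussian with covariance eigenvalues decaying like $e^{-2c_0 k^\beta}$ gives $-\log P(\norm{Af}_2 \leq \varepsilon_n/2 \mid M = m_0) \lesssim (\log(1/\varepsilon_n))^{(\beta+1)/\beta}$. Multiplying by $h(m_0) \geq B_1 e^{-bm_0^{\beta+1}}$ and gathering exponents produces an overall lower bound of the form $\exp(-C''(\log n)^q)$ with $q = \max\{(\beta+1)/\beta,\,(2\delta+1-2\gamma)/\beta\}$; the hypothesis $\delta > \beta/2$ and the particular form of $s$ are calibrated exactly to match this against $e^{-Cn\varepsilon_n^2}$.

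The hard part is the delicate balancing of constants in the regime $\gamma < \delta - \beta/2$, where the Cameron--Martin cost is the dominant contribution to $-\log\Pi(\cdot)$ and must be reconciled with the sieve mass bound by careful choice of $c$, the implicit constants in $\varepsilon_n$, and the lower bound on $h$. Once these trade-offs are resolved, the conclusion follows directly from Theorem \ref{contraction thm}.
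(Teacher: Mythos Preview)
Your outline is essentially the paper's proof in the regime $\gamma \geq \delta - \beta/2$, but in the oversmoothing regime $\gamma < \delta - \beta/2$ there is a genuine gap, and it is not a matter of constants. With your small-ball analysis (which is correct and matches the paper's), the Cameron--Martin cost forces
\[
-\log \Pi\bigl(\norm{Af-Af_0}_2 \leq \varepsilon_n\bigr) \gtrsim (\log n)^{(2\delta+1-2\gamma)/\beta},
\]
so that \eqref{small} requires $n\varepsilon_n^2 \gtrsim (\log n)^{(2\delta+1-2\gamma)/\beta}$; in particular your fixed choice $n\varepsilon_n^2 \simeq (\log n)^{(\beta+1)/\beta}$ is too small here. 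But if you enlarge $\varepsilon_n$ accordingly, your truncation sieve $\mathcal{P}_n = \{M \leq k_n\}$ with $k_n = (c\log n)^{1/\beta}$ only gives
\[
\Pi(\mathcal{P}_n^c) \leq B_2 \exp\bigl(-b c^{(\beta+1)/\beta} (\log n)^{(\beta+1)/\beta}\bigr),
\]
which cannot be made $\leq e^{-(C+4)n\varepsilon_n^2}$ since the exponent $(\beta+1)/\beta$ is strictly smaller than $(2\delta+1-2\gamma)/\beta$ in this regime. If instead you enlarge $k_n$ to match the sieve mass, then $k_n^\beta/\log n \to \infty$ and $\varepsilon_n/\delta_{k_n}$ blows up, destroying \eqref{rate}. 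No juggling of constants resolves this: the powers of $\log n$ are incompatible.

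The paper's remedy is to abandon the truncation sieve and instead take $\mathcal{P}_n = \{f : \norm{P_{k_n}(f)-f}_1 \leq M\xi_n\}$, bounding $\Pi(\mathcal{P}_n^c)$ directly via Borell's inequality applied to the Gaussian tail $\sum_{k>k_n} \tau_k \zeta_k e_k$ (this is where the standard Gaussian assumption on $q$ is really used). Borell gives a bound of the form $e^{-Ln\varepsilon_n^2}$ with $L$ arbitrarily large by enlarging $M$, regardless of the exponent in $n\varepsilon_n^2$, so the mismatch disappears. Note also that the paper's argument does not actually use the upper tail condition on $H$ for the sieve mass---the Gaussian concentration does all the work there.
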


Note that the two conditions on $H$ are mutually satisfiable and that the exponential tails used in Propositions \ref{sieve1} and \ref{sieve3} satisfy this tail condition corresponding to $\beta = 0$. In the severely ill-posed case, oversmoothing the prior by a factor of $\beta /2$ yields the minimax rate of convergence. This factor increases with the ill-posedness of the problem and arises from the lower bounds used for the small-ball probability of $Af$. The lack of adaptation in this case results from the combination of the constraints \eqref{rate} and \eqref{bias}, which are more stringent in the dominating bias case.

\subsection{Gaussian priors}\label{Gaussian}

Consider now the conjugate situation where we take $\Pi$ to be a Gaussian measure on $\H_1$. The conjugate situation provides a canonical example in that the posterior distribution can be computed explicitly in this situation, and so provides a useful reference point for the accuracy of our approach. Recall that a Gaussian distribution has support equal to the closure of its reproducing kernel Hilbert space (RKHS) $\mathbb{H}$ (see \citep{VVVZ} for more details); since the posterior has the same support, consistency is only achievable when $Af_0$ is contained in this set. 

A Gaussian distribution $N(\nu, \Lambda)$ on $\H_1$ is characterized by a mean element $\nu \in \H_1$ and a covariance operator $\Lambda: \H_1 \rightarrow \H_1$, which is a positive semi-definite, self-adjoint and trace class linear operator. A random element $G$ in $\H_1$ has $N( \nu, \Lambda)$ distribution if and only if the stochastic process $( \langle G , h \rangle_1 : h \in \H_1 )$ is a Gaussian process with
\begin{equation*}
\E \langle G , h \rangle_1 = \langle \nu , h \rangle_1, \quad \quad \text{cov} ( \langle G , h \rangle_1 , \langle G , h' \rangle_1 ) = \langle h , \Lambda h' \rangle_1 .
\end{equation*}
We now take the prior to be a mean-zero Gaussian distribution so that $f \sim \Pi = N ( 0 , \Lambda )$. We shall make the following assumption as in \citep{KnVVVZ,KnVVVZ2}.

\begin{condition}\label{Cond4}
Suppose that the operators $A^* A$ and $\Lambda$ have the same set of eigenvectors $\{ e_k \}$ with eigenvalues $\{ \rho_k^2 \}$ and $\{ \tau_k^2 \}$ respectively, with $\tau_k^2 = (1 + k^2)^{-\delta - 1/2}$ and $\rho_k$ satisfying either Condition \ref{Cond1} or \ref{Cond2} as specified.
\end{condition}

\noindent The parameter $\delta$ represents the smoothness of the prior in that $f \in H^s (\H_1)$ for all $s < \delta$ almost surely. In particular, $\E \norm{f}_{H^s}^2 =  \sum_{k=1}^\infty (1+k^2)^{s-\delta-1/2} < \infty$ if and only if $s < \delta$. The mildly ill-posed case is dealt with in \citep{KnVVVZ} using the conjugacy of the prior and we recover the same rates using our testing approach combined with the results of \citep{VVVZ2} (though we do not consider the case of scaling). We firstly obtain (a subset of) the results of Theorem 4.1 of \citep{KnVVVZ}.

\begin{proposition}\label{Gaussian1}
Suppose that $A$ satisfies Condition \ref{Cond1}, that $f_0 \in H^\gamma (\H_1)$ for some $\gamma > 0$, and assign $f$ the Gaussian prior distribution $N(0,\Lambda)$, where $\Lambda$ satisfies Condition \ref{Cond4}. Then for a sufficiently large constant $C>0$,
\begin{equation*}
\Pi \left( \left.  f \in \H_1 : \norm{f - f_0}_1 > C n^{- \frac{\delta \wedge \gamma}{2\alpha + 2\delta + 1} }     \right| Y \right) \rightarrow 0
\end{equation*}
in $\P_{f_0}$-probability as $n \rightarrow \infty$.
\end{proposition}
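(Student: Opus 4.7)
\medskip
\noindent\textbf{Proof plan.} The plan is to apply Theorem~\ref{contraction thm} with the SVD basis, i.e.\ $\phi_k = e_k$, so that Condition~\ref{Cond3} holds trivially and $\delta_k = |\rho_k| \simeq (1+k^2)^{-\alpha/2}$. I would take
\[
\varepsilon_n = n^{-(\alpha+(\delta\wedge\gamma))/(2\alpha+2\delta+1)}, \qquad k_n = \lfloor c_1 n^{1/(2\alpha+2\delta+1)}\rfloor, \qquad \xi_n = M n^{-(\delta\wedge\gamma)/(2\alpha+2\delta+1)},
\]
for constants $c_1,M>0$. Direct computation gives $n\varepsilon_n^2 \simeq n^{(2\delta-2(\delta\wedge\gamma)+1)/(2\alpha+2\delta+1)} \geq k_n$ in both cases $\gamma \geq \delta$ and $\gamma < \delta$, while $\varepsilon_n/\delta_{k_n} \simeq \varepsilon_n k_n^\alpha \simeq n^{-(\delta\wedge\gamma)/(2\alpha+2\delta+1)} \leq C_1 \xi_n$, so \eqref{rate} holds. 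Since $f_0 \in H^\gamma$, one has $\norm{P_{k_n} f_0 - f_0}_1 \leq k_n^{-\gamma}\norm{f_0}_{H^\gamma} \simeq n^{-\gamma/(2\alpha+2\delta+1)} = O(\xi_n)$, so the hypothesis on $f_0$ is satisfied.

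The small-ball condition \eqref{small} is the core of the argument; I would establish it via the concentration function approach for Gaussian measures, as in \citep{VVVZ2}. The prior induces a centered Gaussian element $Af = \sum_k \rho_k f_k g_k$ in $\H_2$ with covariance eigenvalues $\sigma_k^2 := \rho_k^2\tau_k^2 \simeq (1+k^2)^{-(\alpha+\delta+1/2)}$ in the basis $\{g_k\}$, and RKHS $\mathbb{H}_A = \{\sum_k c_k g_k : \sum_k c_k^2/\sigma_k^2 < \infty\}$ with $\norm{\cdot}_{\mathbb{H}_A}^2$ given by this sum. Standard small-ball estimates for Gaussian sequences with polynomially decaying eigenvalues yield $-\log \Pi(\norm{Af}_2 \leq \varepsilon) \lesssim \varepsilon^{-1/(\alpha+\delta)}$. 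For the decentering term, take $h_N = \sum_{k\leq N}\rho_k (f_0)_k g_k \in \mathbb{H}_A$; then
\[
\norm{h_N - Af_0}_2^2 \lesssim N^{-2(\alpha+\gamma)}\norm{f_0}_{H^\gamma}^2, \qquad \norm{h_N}_{\mathbb{H}_A}^2 = \sum_{k\leq N}(f_0)_k^2(1+k^2)^{\delta+1/2},
\]
where the latter is bounded by $\norm{f_0}_{H^\gamma}^2$ when $\gamma \geq \delta+\tfrac12$ and by $N^{(2\delta+1-2\gamma)}\norm{f_0}_{H^\gamma}^2$ otherwise. Choosing $N \simeq \varepsilon^{-1/(\alpha+\gamma)}$ balances the $\H_2$-approximation error with $\varepsilon^2$, and substituting $\varepsilon = \varepsilon_n$ yields $\phi_{Af_0}(\varepsilon_n) \leq C n\varepsilon_n^2$, giving \eqref{small}.

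For the bias condition \eqref{bias}, set $\mathcal{P}_n = \{f \in \H_1 : \norm{(I-P_{k_n})f}_1 \leq C_2\xi_n\}$. Under the Gaussian prior, $(I-P_{k_n})f = \sum_{k>k_n}f_k e_k$ is a centered Gaussian element of $\H_1$ with $\E\norm{(I-P_{k_n})f}_1^2 = \sum_{k>k_n}\tau_k^2 \simeq k_n^{-2\delta}$ and largest eigenvalue $\sup_{k>k_n}\tau_k^2 \simeq k_n^{-2\delta-1}$. Since $k_n^{-\delta} = O(\xi_n)$, Borell's inequality yields
\[
\Pi(\mathcal{P}_n^c) \leq 2\exp(-c_0 \xi_n^2 k_n^{2\delta+1}),
\]
and $\xi_n^2 k_n^{2\delta+1} \simeq n\varepsilon_n^2$ by direct computation in both regimes, so for $C_2$ large enough \eqref{bias} holds.

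The main obstacle is getting the concentration function bound right: when $\gamma < \delta$ the decentering contribution $\varepsilon^{-(2\delta+1-2\gamma)/(\alpha+\gamma)}$ dominates the centered small-ball exponent $\varepsilon^{-1/(\alpha+\delta)}$, while for $\gamma \geq \delta$ the roles reverse. Equating the dominant term with $n\varepsilon^2$ in each regime is precisely what produces the exponent $\delta\wedge\gamma$ in the contraction rate, and the case split must be tracked through the verification of \eqref{rate} and \eqref{bias} to ensure that the same choice of $(\varepsilon_n, k_n, \xi_n)$ works in both regimes.
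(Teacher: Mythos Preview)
Your proof is correct and follows essentially the same approach as the paper: the concentration-function bound for $Af$ (Lemma~\ref{concentration lemma}) to verify \eqref{small} via Theorem~2.1 of \citep{VVVZ2}, and Borell's inequality applied to the Gaussian tail $(I-P_{k_n})f$ to verify \eqref{bias}. One minor difference worth noting: you take $k_n \simeq n^{1/(2\alpha+2\delta+1)}$ uniformly in both regimes, whereas the paper writes $k_n \simeq n\varepsilon_n^2$; your choice is in fact the sharper one in the oversmoothing case $\gamma<\delta$, since there $n\varepsilon_n^2 = n^{(2\delta-2\gamma+1)/(2\alpha+2\delta+1)} \gg n^{1/(2\alpha+2\delta+1)}$ and using the larger $k_n$ would inflate $\varepsilon_n/\delta_{k_n}\simeq \varepsilon_n k_n^\alpha$ to $n^{(2\alpha(\delta-\gamma)-\gamma)/(2\alpha+2\delta+1)}$ rather than the stated $n^{-\gamma/(2\alpha+2\delta+1)}$.
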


We therefore obtain the minimax rate of convergence only when the prior smoothness matches the true unknown smoothness. While this prior is not adaptive, it is reassuring that if the true smoothness is known then the optimal rate of convergence is attainable. Given that this result is obtained using the testing approach introduced in \citep{GhGhVV}, it should be possible to apply the ideas of \citep{VVVZ3} in using a Gaussian random field with inverse Gamma bandwidth to construct an adaptive Gaussian prior. However, we do not pursue such an argument here since it is beyond the scope of the present article. Consider now the severely ill-posed analogue.

\begin{proposition}\label{Gaussian2}
Suppose that $A$ satisfies Condition \ref{Cond2}, that $f_0 \in H^\gamma (\H_1)$ for some $\gamma > 0$, and assign $f$ the Gaussian prior distribution $N(0,\Lambda)$, where $\Lambda$ satisfies Condition \ref{Cond4} for some $\delta > \beta /2$. Then for a sufficiently large constant $C>0$,
\begin{equation*}
\Pi \left( \left.   f \in \H_1 : \norm{f - f_0}_1 >  C \left( \log n \right)^{- \frac{ (\delta - \beta /2) \wedge \gamma }{\beta}}      \right| Y \right) \rightarrow 0
\end{equation*}
in $\P_{f_0}$-probability as $n \rightarrow \infty$.
\end{proposition}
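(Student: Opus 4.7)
The plan is to apply Theorem~\ref{contraction thm} with the diagonalizing basis $\phi_k = e_k$, so that $\delta_k = |\rho_k|$. Setting $s := (\delta - \beta/2) \wedge \gamma > 0$, I will choose
\begin{equation*}
k_n = \lfloor c_* (\log n)^{1/\beta}\rfloor, \qquad \xi_n = (\log n)^{-s/\beta}, \qquad \varepsilon_n = c_1 (\log n)^{(2\delta+1-2s)/(2\beta)}/\sqrt{n},
\end{equation*}
for constants $c_*, c_1 > 0$ with $c_0 c_*^\beta < 1/2$. A direct check of polylog exponents shows that $\sqrt{n}\varepsilon_n \to \infty$, that $k_n \leq c n\varepsilon_n^2$ (using $s < \delta$), and that $\varepsilon_n/\delta_{k_n} \leq C_1 \xi_n$, since the latter reduces to $n^{-1/2 + c_0 c_*^\beta} \cdot (\log n)^{(2\delta+1+2\alpha_1)/(2\beta)} \leq C_1$, which holds for large $n$ by the choice of $c_*$. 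The deterministic bias $\|P_{k_n} f_0 - f_0\|_1 \leq k_n^{-\gamma}\|f_0\|_{H^\gamma} = O(\xi_n)$ holds since $s \leq \gamma$.

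For the prior mass condition \eqref{bias}, take $\mathcal{P}_n = \{f : \|(I - P_{k_n}) f\|_1 \leq C_2 \xi_n\}$. Under $\Pi = N(0,\Lambda)$, the tail element $(I - P_{k_n}) f$ is a centred Gaussian in $\H_1$ with mean-square $\sum_{k > k_n} \tau_k^2 \asymp k_n^{-2\delta}$ and weak variance $\tau_{k_n+1}^2 \asymp k_n^{-2\delta - 1}$; Borell's inequality yields
\begin{equation*}
\Pi(\mathcal{P}_n^c) \leq \exp\bigl( -c' \xi_n^2 / \tau_{k_n+1}^2 \bigr) \asymp \exp\bigl( -c' (\log n)^{(2\delta + 1 - 2s)/\beta} \bigr),
\end{equation*}
for some $c'>0$. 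This matches $n\varepsilon_n^2 \asymp c_1^2 (\log n)^{(2\delta + 1 - 2s)/\beta}$ up to constants; taking $c_1$ sufficiently small relative to $c'$ gives the required bound $\leq e^{-(C+4) n\varepsilon_n^2}$.

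The principal obstacle is the small-ball condition \eqref{small}, which I handle via the Gaussian concentration-function approach of \citep{VVVZ} applied to $(\H_1, \|A\cdot\|_2)$. The RKHS of $\Pi$ under Condition~\ref{Cond4} is $H^{\delta+1/2}$, so by the Kuelbs--Li lemma it suffices to bound
\begin{equation*}
\varphi_{f_0}(\varepsilon) = \inf_{\substack{h \in H^{\delta+1/2} \\ \|A(h - f_0)\|_2 \leq \varepsilon}} \tfrac{1}{2}\|h\|_{H^{\delta + 1/2}}^2 \;-\; \log \Pi\bigl(\|Af\|_2 \leq \varepsilon\bigr)
\end{equation*}
at $\varepsilon_n$ by a multiple of $n\varepsilon_n^2$. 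The centred small ball is $P(\sum_k \lambda_k Z_k^2 \leq \varepsilon^2)$ with iid standard $Z_k$ and $\lambda_k := \rho_k^2\tau_k^2 \asymp k^{-2\alpha_1 - 2\delta - 1}e^{-2c_0 k^\beta}$; truncating at level $N \asymp (\log(1/\varepsilon))^{1/\beta}$ gives the standard polylog estimate $-\log \Pi(\|Af\|_2 \leq \varepsilon) \lesssim (\log(1/\varepsilon))^{1 + 1/\beta}$. For the Cameron--Martin approximation, pick $h = P_K f_0$ with $K \asymp (c_0^{-1}\log(1/\varepsilon))^{1/\beta}$; then $\|A(h - f_0)\|_2^2 \lesssim K^{-2\gamma - 2\alpha_1}e^{-2c_0 K^\beta}\|f_0\|_{H^\gamma}^2 \leq \varepsilon^2$, while $\|h\|_{H^{\delta+1/2}}^2 \lesssim K^{(2\delta + 1 - 2\gamma)_+}\|f_0\|_{H^\gamma}^2$ is also polylog. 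Summing, $\varphi_{f_0}(\varepsilon_n) \lesssim (\log n)^{1 + 1/\beta}$, which is dominated by $n\varepsilon_n^2 \asymp (\log n)^{(2\delta + 1 - 2s)/\beta}$ precisely when $2\delta + 1 - 2s \geq \beta + 1$, equivalently $s \leq \delta - \beta/2$; this is built into the definition of $s$.

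With all hypotheses verified, Theorem~\ref{contraction thm} delivers the claimed rate $\xi_n = (\log n)^{-((\delta - \beta/2) \wedge \gamma)/\beta}$. The oversmoothing exponent $\beta/2$ is exactly the slack in the saturated inequality $2\delta + 1 - 2s \geq \beta + 1$: the Gaussian prior cannot concentrate in $\|A\cdot\|_2$-balls at a finer rate than the corresponding polylog threshold, exactly as in the sieve-prior analogue Proposition~\ref{sieve4}.
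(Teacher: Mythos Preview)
Your proof follows the paper's approach exactly: apply Theorem~\ref{contraction thm} in the spectral basis, verify \eqref{small} via the concentration function of $Af$ (the paper's Lemma~\ref{concentration lemma}), and verify \eqref{bias} via Borell's inequality on the Gaussian tail $(I-P_{k_n})f$ as in \eqref{Borell2}. Your unified parametrisation $\varepsilon_n = c_1(\log n)^{(2\delta+1-2s)/(2\beta)}n^{-1/2}$ with $s=(\delta-\beta/2)\wedge\gamma$ is just a compact packaging of the paper's case split $\gamma+\beta/2\lessgtr\delta$, and the choices of $k_n$ and $\xi_n$ coincide.

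There is one slip in your small-ball estimate. Your claim $\varphi_{f_0}(\varepsilon_n)\lesssim(\log n)^{1+1/\beta}$ is only valid when $\gamma+\beta/2\geq\delta$. In the regime $s=\gamma<\delta-\beta/2$ the Cameron--Martin term $\|h\|_{H^{\delta+1/2}}^2\lesssim K^{2\delta+1-2\gamma}\asymp(\log n)^{(2\delta+1-2\gamma)/\beta}$, which you yourself compute, strictly dominates the centred small-ball term, so the sum is \emph{not} bounded by $(\log n)^{1+1/\beta}$. This does not destroy the argument---the correct bound is $(\log n)^{(2\delta+1-2s)/\beta}$ in both regimes, and this still equals $n\varepsilon_n^2$ up to constants, so \eqref{small} holds with some $C>0$ and you then close via the freedom in $C_2$---but the sentence as written is false and should be corrected. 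A minor typo: in bounding $\varepsilon_n/\delta_{k_n}$ you need the \emph{lower} bound on $|\rho_k|$ from Condition~\ref{Cond2}, so the polylog exponent involves $\alpha_0$, not $\alpha_1$.
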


A gap arises in our rates when the prior undersmooths (i.e. $\gamma + \beta /2 < \delta$), since in the case of the heat equation ($\beta = 2$), \citep{KnVVVZ2} obtain rate $( \log n)^{-\frac{\delta \wedge \gamma}{2}}$. This gap appears to arise in Lemma \ref{RKHS lemma} from our bound for the covering number of the unit ball of the RKHS of $Af$, which is used to lower bound the small-ball probability of $Af$ using the techniques of \citep{KuLi}. This lower bound seems difficult to improve and so this gap may be an artefact of our proof.

\subsection{Uniform wavelet series}\label{wavelet}

The approach used in this section can be generalized to any band-limited orthonormal basis for a general inverse problem in the sense of Condition \ref{Cond3}. However, for ease of exposition, we restrict ourselves to the specific case of periodic deconvolution using wavelets. Therefore, consider the case of deconvolution under the standard white noise model on $[0,1]$ described in Section \ref{deconvolution section} so that $A$ is given by \eqref{convolution} with SVD given by the Fourier basis. Suppose that we have an a-priori belief that the true function $f_0$ satisfies some H\"older smoothness condition rather than a Sobolev condition. We shall expand upon the uniform wavelet series introduced in \citep{GiNi} by creating a hierarchical prior that uniformly distributes the wavelet coefficients on a H\"older ball of random radius.

Let $(\Phi , \Psi)$ denote the Meyer scaling and wavelet function (see \citep{Me} for more details). As usual, define the dilated and translated wavelet at resolution level $j$ and scale position $k / 2^j$ by $\Phi_{jk} (x) = 2^{j/2} \Phi (2^j x - k)$, $\Psi_{jk} (x) = 2^{j/2} \Psi (2^j x - k)$  for $j,k \in \mathbb{Z}$. The system of wavelet functions provides a multiresolution analysis of $L^2 (\R)$. By periodizing the wavelet functions
\begin{equation*}
\phi_{jk} (x) = \sum_{m \in \mathbb{Z} } \Phi_{jk} (x+m), \quad \quad \quad \psi_{jk} (x) = \sum_{m \in \mathbb{Z} } \Psi_{jk} (x+m)  ,
\end{equation*}
we obtain a natural multiresolution analysis for periodic functions in $L^2 ([0,1])$. We thus have the following expansion for any periodic function $f \in L^2 ([0,1])$:
\begin{equation*}
f = \sum_{k=0}^{2^{j_0}-1 } \alpha_{j_0 k} \phi_{j_0 k} + \sum_{l=j_0}^\infty \sum_{k=0}^{2^l-1} \beta_{lk} \psi_{lk}  ,
\end{equation*}
where the wavelet coefficients are given by $\alpha_{jk} = \langle f , \phi_{jk} \rangle_{L^2}$ and $\beta_{lk} = \langle f , \psi_{lk} \rangle_{L^2}$.

Meyer wavelets are band limited: in particular the Fourier transform $F_\R [\Psi] (w) = \int_\R \Psi (x) e^{-2\pi i w x} dx$ over $\R$ satisfies $\supp (F [ \Psi ]) \subset \{ w : |w| \in [1 /3 , 4 /3] \}$. This implies that the periodized wavelets are themselves band-limited with $\supp (F_\T [\psi] ) \subset \mathbb{Z} \cap \{ w : |w| \in [ 1 /3 , 4 /3] \}$ (c.f. Theorem 8.31 in \citep{Fo}), where $F_\T [\psi] (m) = \int_0^1 \psi (x) e^{- 2\pi i m x} dx$ denotes the $m$th Fourier coefficient of $\psi$. In particular, each wavelet function has finite Fourier series and so the periodized Meyer wavelet basis satisfies Condition \ref{Cond3}. As mentioned in the introduction, band-limited wavelets have been employed to great effect in the deconvolution problem by a number of authors (see for example \citep{JoKePiRa,PeVi} for references).

In \citep{GiNi}, it is assumed that a quantitative upper bound is known on the $C^\delta$-norm of the unknown function. We shall relax this to the case where it is simply known that $\norm{f_0}_{C^\delta} < \infty$. A natural way to circumvent this problem is to treat the unknown radius $B$ of our H\"older ball as a hyperparameter and assign to it a prior distribution, thus creating a hierarchical model. Assign to $B$ a probability distribution $H$, which for simplicity we restrict to the natural numbers $\mathbb{N}$, with probability mass function $h$. Given $B$, we then consider the periodic function
\begin{equation*}
U_\delta (x) =  u \phi (x) + \sum_{l = 0}^\infty \sum_{k = 0}^{2^l -1} 2^{-l (\delta + 1/2)} u_{lk} \psi_{lk} (x) ,
\end{equation*}
where $u,u_k \sim U (-B,B)$ are i.i.d.. Now by the wavelet characterization of the Besov spaces $B_{pq}^s ([0,1])$ (see for instance Definition 1 of \citep{GiNi}), we have that $U_\delta \in C^\delta ([0,1]) = B_{\infty \infty}^\delta ([0,1])$ almost surely and in particular $\norm{U_\delta}_{B_{\infty \infty}^\delta} \leq B$. Denote the law of $U_\delta$ given $B$ by $\Pi^{\delta,B}$ so that our full prior can be expressed as
\begin{equation*}
\Pi^{\delta,H} = \sum_{r = 1}^\infty h (r) \Pi^{\delta , r} ,
\end{equation*}
giving a sieve-type prior. We consider only the mildly ill-posed case.

\begin{proposition}\label{wavelet1}
Suppose that $A$ is of the form \eqref{convolution} and satisfies Condition \ref{Cond1} and that $f_0$ is periodic and in $C^\gamma ([0,1])$ for some $\gamma >0$. Suppose that the distribution $H$ satisfies $h(r) \geq e^{-D r^\nu}$ for all $r \in \mathbb{N}$ and $1 - H(r) \lesssim e^{-Dr^v}$ as $r \rightarrow \infty$ for some constants $D>0$ and $1/\delta < \nu \leq \infty$. Then there exists a finite constant $C$ such that 
\begin{equation*}
\Pi^{\delta , H} \left(  \left. f \in \mathcal{P} : \norm{f - f_0}_{L^2} \geq C \xi_n  \right| Y  \right) \rightarrow 0
\end{equation*}
in $\P_{f_0}$-probability as $n \rightarrow \infty$, where 
\begin{equation*}
\xi_n  = \left\{
     \begin{array}{lr}
       n^{- \frac{\delta - 1 / \nu }{2\alpha + 2(\delta - 1 / \nu) +1}} &  \text{ if } \delta < \gamma + \frac{1}{\nu} \\
       n^{-\frac{\gamma}{2\alpha + 2\gamma + 1}}(\log n)^\eta & \text{ if } \delta = \gamma + \frac{1}{\nu}  \\
     \end{array}
   \right. ,
\end{equation*}
where $\eta = \frac{(2\alpha+1)(\alpha+\gamma)}{2\alpha + 2\gamma+ 1}$. If $H$ satisfies the sharper tail condition $1 - H(r) \lesssim \exp \left( -e^{ D r^{\nu} } \right)$ as $r \rightarrow \infty$ for some constants $D>0$ and $\nu > 0$, then the rate improves to
\begin{equation*}
\xi_n =    n^{- \frac{\delta }{2\alpha + 2\delta  +1}} ( \log n)^{\eta '}
\end{equation*}
for all $\delta \leq \gamma$, where $ \eta' = \frac{(2\alpha + 1) \left( (\alpha + \delta) \vee (1/\nu) \right)}{2\alpha + 2\delta +1}$.
\end{proposition}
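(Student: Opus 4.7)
My approach is to apply Theorem~\ref{contraction thm} with $\H_1=L^2([0,1])$ and the periodized Meyer wavelet basis $\{\phi_{j_0 k},\psi_{lk}\}$ in the role of $\{\phi_k\}$. Since each periodized Meyer wavelet at level $l$ has finite Fourier series supported on $|m|\in[2^l/3,2^{l+2}/3]$, Condition~\ref{Cond3} is satisfied. For the projection $P_{k_n}$ onto all wavelets up to resolution level $J_n$, with $k_n\simeq 2^{J_n}$, Condition~\ref{Cond1} yields $\delta_{k_n}\simeq 2^{-J_n\alpha}\simeq k_n^{-\alpha}$. The requirements in \eqref{rate} then reduce to $k_n\lesssim n\varepsilon_n^2$ and $\varepsilon_n\lesssim\xi_n k_n^{-\alpha}$, with the balanced choice $k_n\simeq n\varepsilon_n^2$ producing $\xi_n\simeq n^\alpha\varepsilon_n^{2\alpha+1}$, as indicated after Theorem~\ref{contraction thm}.

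For the sieve I take $\mathcal{P}_n=\{f : B\leq R_n,\ \norm{P_{k_n}f-f}_{L^2}\leq C_2\xi_n\}$. On $\{B\leq R_n\}$ the bound $|u_{lk}|\leq R_n$ together with orthonormality gives deterministically
\[
\norm{U_\delta-P_{2^{J_n}}U_\delta}_{L^2}^2=\sum_{l\geq J_n}\sum_{k=0}^{2^l-1}2^{-l(2\delta+1)}u_{lk}^2\leq R_n^2\sum_{l\geq J_n}2^{-2\delta l}\lesssim R_n^2 2^{-2\delta J_n},
\]
so the bias event is implied by $R_n\lesssim \xi_n k_n^\delta$. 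The prior mass of $\mathcal{P}_n^c$ is controlled by $\Pi(B>R_n)\lesssim e^{-DR_n^\nu}$, and so \eqref{bias} holds provided $R_n\gtrsim(n\varepsilon_n^2)^{1/\nu}$; under the doubly exponential tail $1-H(r)\lesssim \exp(-e^{Dr^\nu})$ this weakens to $R_n\gtrsim(\log n)^{1/\nu}$. The true-function bias $\norm{P_{k_n}f_0-f_0}_{L^2}\lesssim 2^{-J_n\gamma}\simeq k_n^{-\gamma}$ is a standard consequence of the wavelet characterization $f_0\in C^\gamma\Rightarrow |\beta_{lk}(f_0)|\lesssim 2^{-l(\gamma+1/2)}$.

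The main obstacle is the small-ball condition \eqref{small}. By Condition~\ref{Cond1}, $\norm{Af-Af_0}_2\simeq\norm{f-f_0}_{H^{-\alpha}}$, and the frequency localization of Meyer wavelets gives the level-wise representation
\[
\norm{U_\delta-f_0}_{H^{-\alpha}}^2\simeq\sum_l 2^{-2l(\alpha+\delta+1/2)}\sum_{k=0}^{2^l-1}(u_{lk}-u^*_{lk})^2,\qquad u^*_{lk}:=2^{l(\delta+1/2)}\beta_{lk}(f_0),
\]
with $|u^*_{lk}|\lesssim 2^{l(\delta-\gamma)_+}$ since $f_0\in C^\gamma$. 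Conditioning on $B=r_n$ costs a factor $h(r_n)\geq e^{-Dr_n^\nu}$. I then truncate the prior at a level $J^*_n$ with $r_n 2^{-J^*_n(\alpha+\delta)}\lesssim\varepsilon_n$, which makes the tail from levels $l>J^*_n$ deterministically $O(\varepsilon_n)$. On the lower levels, provided $r_n\gtrsim 2^{J^*_n(\delta-\gamma)_+}$ so that each target $u^*_{lk}$ sits safely inside $[-r_n,r_n]$, uniform windows $w_l\simeq\varepsilon_n 2^{l(\alpha+\delta)}2^{-(J^*_n-l)(1+\epsilon)/2}$ yield $\norm{U_\delta-f_0}_{H^{-\alpha}}\leq\varepsilon_n$ with conditional probability at least $\prod_{l\leq J^*_n}(w_l/(2r_n))^{2^l}$. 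Summing the logarithms produces $-\log\Pi(\norm{Af-Af_0}_2\leq\varepsilon_n)\lesssim 2^{J^*_n}\log(r_n/\varepsilon_n)+r_n^\nu$ with $2^{J^*_n}\simeq(r_n/\varepsilon_n)^{1/(\alpha+\delta)}$, so \eqref{small} reduces to demanding that this quantity is $O(n\varepsilon_n^2)$.

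Combining the inequalities $k_n\simeq n\varepsilon_n^2$, $\xi_n\simeq\varepsilon_n k_n^\alpha$, $R_n\lesssim\xi_n k_n^\delta$ and $R_n\gtrsim(n\varepsilon_n^2)^{1/\nu}$ yields effective smoothness $\kappa=(\delta-1/\nu)\wedge\gamma$ and $\xi_n\simeq n^{-\kappa/(2\alpha+2\kappa+1)}$ up to polylogarithmic factors, the logarithmic factor of the stated form $\eta$ arising precisely at the boundary $\delta=\gamma+1/\nu$ where the prior approximation $R_n k_n^{-\delta}$ and the true-function approximation $k_n^{-\gamma}$ coincide. Under the doubly exponential tail, the cost of the hyperprior drops to $R_n\simeq(\log n)^{1/\nu}$, so the $1/\nu$ penalty disappears and one obtains effective smoothness $\delta\wedge\gamma$ together with the sharper rate $n^{-\delta/(2\alpha+2\delta+1)}(\log n)^{\eta'}$ for $\delta\leq\gamma$. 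The hardest step is the small-ball lower bound, which must be uniform in the random radius $B$ while exploiting the H\"older regularity of $f_0$ to certify admissibility of every target $u^*_{lk}$ inside the random box $[-r_n,r_n]$.
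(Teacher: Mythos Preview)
Your approach is correct and follows essentially the same route as the paper: verify Condition~\ref{Cond3} for periodized Meyer wavelets, take $k_n=2^{J_n}\simeq n\varepsilon_n^2$ so that $\delta_{k_n}\simeq 2^{-\alpha J_n}$, establish the small-ball bound \eqref{small} by conditioning on the hyperparameter $B$, truncating at a level $J_n^*$ and multiplying uniform-interval probabilities, and control \eqref{bias} through the tail of $H$ together with the deterministic bound $\norm{U_\delta-P_{2^{J_n}}U_\delta}_{L^2}\lesssim R_n 2^{-\delta J_n}$. The balancing of the resulting constraints is also the same.

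The one organisational difference is in the small-ball step for $\delta>\gamma$. The paper splits cases: for $\delta\leq\gamma$ it conditions on a \emph{fixed} integer $B_0\geq\norm{f_0}_{C^\gamma}$, while for $\gamma<\delta\leq\gamma+1/\nu$ it first replaces $f_0$ by its best $C^\delta$-approximant $h_{r_n}$ with $\norm{h_{r_n}}_{C^\delta}\leq r_n$ (so $\norm{f_0-h_{r_n}}_{H^{-\alpha}}\lesssim r_n^{-(\alpha+\gamma)/(\delta-\gamma)}$) and then applies the first argument to $h_{r_n}$. You instead keep $f_0$ throughout and let $r_n$ grow so that the normalised targets $u^*_{lk}=2^{l(\delta+1/2)}\beta_{lk}(f_0)$ fall inside $[-r_n,r_n]$ up to level $J_n^*$. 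These are equivalent: your constraint $r_n\gtrsim 2^{J_n^*(\delta-\gamma)}$ together with $r_n 2^{-J_n^*(\alpha+\delta)}\lesssim\varepsilon_n$ forces $r_n\simeq\varepsilon_n^{-(\delta-\gamma)/(\alpha+\gamma)}$, which is exactly the paper's choice of $r_n$, and the resulting exponent $2^{J_n^*}\log(r_n/\varepsilon_n)+r_n^\nu$ reproduces the paper's bound. One small point to make explicit in your write-up: the claim that the tail beyond $J_n^*$ is deterministically $O(\varepsilon_n)$ uses \emph{both} constraints simultaneously when $\delta>\gamma$, since the target contribution $\sum_{l>J_n^*}2^{-2l(\alpha+\gamma)}$ must also be controlled, not only the prior contribution $r_n^2\sum_{l>J_n^*}2^{-2l(\alpha+\delta)}$.
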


As well as the prior smoothness, the thickness of the tail of $H$, as measured by $\nu$, affects the rate. When $\delta < \gamma + \frac{1}{\nu}$, we attain the optimal rate of convergence for a $(\delta - 1 / \nu)$-smooth function, that is we lose $1/\nu$ degrees of smoothness. This is entirely due to the bias constraint \eqref{bias}: the bias of a typical element arising from $\Pi^{\delta,B}$ is proportional to $B$, and the approximation errors therefore grow on average with the thickness of the tail of $H$. Note that this penalty disappears (or is relegated to logarithmic terms) if we take $H$ to have compact support ($\nu = \infty$) or a double exponential tail. We obtain the minimax rate of convergence, up to logarithmic terms, only if the prior smoothness matches the underlying smoothness of $f_0$  up to the correction term $\frac{1}{\nu}$. Finally, note that if we take $\nu = \infty$ and the prior oversmooths the true parameter $f_0$, then we do not have posterior consistency since $f_0$ does not lie in the support of $\Pi^{\delta,H}$.

The assumptions on $H$ mirror those sometimes placed on the prior distribution of the scale parameter in a Dirichlet mixtures of normal distributions \citep{GhVV}. Our results therefore mirror those in Theorem 1 of \citep{GhVV} in that we lose a factor in our rates due to the hierarchical prior needing to be able to approximate the true parameter $f_0$. We finally note that a sharp rate is also only attained in that situation when the hyperprior on the scale parameter has compact support.

\section{Proof of Theorem \ref{contraction thm}}

A key step in the proof of Theorem \ref{contraction thm} is the construction of nonparametric tests for suitably separated alternatives in $\H_1$. The tests are constructed based on the norm of a simple plug-in estimator of $f_0$, which is then split using a standard bias-variance decomposition. We require an exponential bound on the type-II error of our test and can attain this using Borell's inequality \citep{Bo}. We can construct a suitable linear estimator for $f_0$ using band-limited (in the sense of the $\{ e_k \}$-basis) elements in a similar fashion to the deconvolution density estimators based on Fourier techniques studied in \citep{JoKePiRa} and \citep{PeVi}.

Suppose that $\{ \phi_k \}$ is an orthonormal basis of $\H_1$ satisfying Condition \ref{Cond3}. Writing $\phi_{k,i} = \langle \phi_k , e_i \rangle_1$ and using that $\{ g_k \}$ is the conjugate basis to $\{ e_k \}$ for $A$,
\begin{equation*}
\langle f , \phi_k \rangle_1 = \langle f  , \sum_i \phi_{k,i} \rho_i^{-1} A^* g_i  \rangle_1   = \langle Af , \sum_i \phi_{k,i} \rho_i^{-1} g_i \rangle_2 = : \langle Af, \tilde{\phi}_k \rangle_2  ,
\end{equation*}
where
\begin{equation*}
\tilde{\phi}_k = \sum_i \rho_i^{-1} \phi_{k,i} g_i  .
\end{equation*}
Recall that by Condition \ref{Cond3}, only finitely many of the $\phi_{k,i}$ are non-zero. In particular, note that if $\phi_k = e_k$, then we simply have $\tilde{\phi}_k  = \rho_k^{-1} g_k$. In this way, we derive a (not necessarily orthonormal) basis of the range of $A$ that is conjugate to $\{ \phi_k \}$. We can therefore express the coordinates of $f$ in the $\{ \phi_k \}$ basis of $\H_1$ in terms of the action of $\{ \tilde{\phi}_k \}$ on $Af$. Considering this action, define
\begin{equation*}
\tilde{y}_k : =  Y_{\tilde{\phi}_k}  =  \langle f , \phi_k \rangle_1 + \frac{1}{\sqrt{n}} \tilde{Z}_k  ,
\end{equation*}
where $\tilde{Z}_k = Z_{\tilde{\phi}_k}$ are (not necessarily independent) mean-zero Gaussian random variables with covariance $\E \tilde{Z}_k \tilde{Z}_l = \langle \tilde{\phi}_k , \tilde{\phi}_l \rangle_2$. Thus the sequence $\{ \tilde{y}_k \}$ provides an unbiased estimator of the coefficients of the true regression function $f$ in the basis $\{ \phi_k \}$. The sequence $(\tilde{Z}_k)$ is independent if and only if $\{ \tilde{\phi}_k \}$ forms an orthogonal sequence, which is the case when $\phi_k = e_k$. This suggests a natural linear estimator of $f$:
\begin{equation*}
f_n  = \sum_{k = 1}^{k_n} \tilde{y}_k \phi_k ,
\end{equation*}
where the resolution level $k_n$ is to be specified. Recall that we write $P_{k}$ for the orthogonal projection operator onto the linear span of $\{ \phi_l : 1 \leq l \leq k \}$. The estimator $f_n$ then decomposes immediately into its bias and variance parts
\begin{equation*}
f_n  = P_{k_n} (f)  + \frac{1}{\sqrt{n}} \sum_{k = 1 }^{k_n} \tilde{Z}_k \phi_k .
\end{equation*}

We now construct an exponential inequality for the fluctuations of the random part of $f_n$, that is the centred term $f_n - \E f_n$, following the method presented in Section 3.1 of \citep{GiNi}. By the Hahn-Banach theorem and the separability of $\H_1$, there exists a countable and dense subset $B_0$ of the unit ball of $\H_1' = \H_1$ such that
\begin{equation*}
\norm{f}_1 = \sup_{h \in B_0 } \left| \langle  h , f \rangle_1 \right|.
\end{equation*}
The norm of the variance part of our estimator can thus be written
\begin{equation*}
\norm{f_n - \E f_n}_1 =  \sup_{h \in B_0} \frac{1}{\sqrt{n}} \left| \sum_{k = 1}^{k_n} \tilde{Z}_k \langle h , \phi_k  \rangle_1   \right| =: \sup_{h \in B_0}  | G(h)| ,
\end{equation*}
where $G = ( G(h) : h \in B_0)$ is a centred Gaussian process indexed by a countable set. Applying the version of Borell's inequality for the supremum of Gaussian processes (\citep{Le}, page 134) gives
\begin{equation}
\begin{split}
e^{-x^2 / 2\sigma^2} & \geq  \P \left( \sup_{h \in B_0} |G(h)| - \E \sup_{h \in B_0} |G(h)| \geq x \right)  \\
& = \P \left( \norm{f_n - \E f_n}_1 - \E \norm{f_n - \E f_n}_1 \geq x \right)  ,
\label{Borell}
\end{split}
\end{equation}
where $\sigma^2 = \sup_{h \in B_0} \E G(h)^2$ is the weak variance of $G$. By Jensen's inequality, the expectation can be controlled as
\begin{equation*}
\E \norm{f_n - \E f_n}_1   \leq \frac{1}{\sqrt{n}} \left(  \sum_{k=1}^{k_n} \E \tilde{Z}_k^2 \right)^{1/2} = \frac{1}{\sqrt{n}} \left( \sum_{k=1}^{k_n} \snorm{\tilde{\phi}_k}_2^2 \right)^{1/2}  .
\end{equation*}
Recall the definitions \eqref{deltaset} and \eqref{delta} of the sets $A_k$ and quantities $\delta_k$. Since the $\{ \delta_k \}$ form a decreasing sequence
\begin{equation*}
\snorm{\tilde{\phi}_k}_2^2 = \sum_{i \in A_k} \rho_i^{-2} \phi_{k,i}^{2}  \leq \frac{1}{\delta_k^2} \sum_{i \in A_k} \phi_{k,i}^2 \leq \frac{1}{\delta_{k_n}^2}  ,
\end{equation*}
so that
\begin{equation*}
\E \norm{f_n - \E f_n}_1  \leq \frac{\sqrt{k_n}}{ \delta_{k_n} \sqrt{n} } .
\end{equation*}
Considering the weak variance $\sigma^2$, we have that for $h \in B_0$, 
\begin{equation*}
\begin{split}
n \E G(h)^2  & = \sum_{k = 1}^{k_n} \sum_{l=1}^{k_n} \langle h, \phi_k \rangle_1 \langle h , \phi_l \rangle_1 \E \tilde{Z}_k  \tilde{Z}_l \\
& = \sum_{k = 1}^{k_n} \sum_{l=1}^{k_n} \langle h, \phi_k \rangle_1 \langle h , \phi_l \rangle_1 \langle \tilde{\phi}_k , \tilde{\phi}_l \rangle_2    = \norm{\sum_{k = 1}^{k_n}  \langle h, \phi_k \rangle_1   \tilde{\phi}_k}_2^2  .
\end{split}
\end{equation*}
While the basis $\{ \tilde{\phi}_k \}$ is in general not orthogonal, it is sufficient that each finite sequence forms a Riesz sequence (whose constants vary with the number of terms). Since the $A_k$'s form an increasing sequence of sets and using the definition of $\tilde{\phi}_k$,
\begin{equation*}
\begin{split}
\norm{ \sum_{k=1}^{k_n} \langle h , \phi_k \rangle_1 \tilde{\phi}_k  }_2^2 & = \norm{ \sum_{k = 1}^{k_n} \langle h , \phi_k \rangle_1  \sum_{i \in A_k} \rho_i^{-1}  \phi_{k,i} g_i }_2^2    \\
& = \sum_{i  \in A_{k_n} }  \left( \sum_{k = 1}^{k_n} \langle h , \phi_k \rangle_1 \rho_i^{-1} \langle \phi_k , e_i \rangle_1  \right)^2    \\
& \leq \frac{1}{\delta_{k_n}^2 } \sum_{i =1}^\infty \left| \left\langle  \sum_{k=1}^{k_n} \langle h , \phi_k \rangle_1 \phi_k , e_i  \right\rangle_1 \right|^2  \\
&  =   \frac{1}{\delta_{k_n}^2}  \norm{ \sum_{k=1}^{k_n} \langle h , \phi_k  \rangle \phi_k }_1^2  \leq \frac{1}{\delta_{k_n}^2 } \norm{h}_1^2  .
\end{split}
\end{equation*}
Combining these yields
\begin{equation*}
\sigma^2 \leq \frac{1}{n \delta_{k_n}^2} \sup_{h \in B_0}  \norm{h}_1^2  \leq \frac{1}{n \delta_{k_n}^2}  .
\end{equation*}
Substituting these bounds into Borell's inequality gives
\begin{equation*}
\P \left( \norm{f_n - \E f_n }_1 \geq x + \frac{\sqrt{k_n} }{\delta_{k_n} \sqrt{n} } \right)   \leq \exp \left(  - \frac{1}{2} n \delta_{k_n}^2 x^2 \right)   ,
\end{equation*}
which, upon letting $x = \frac{\sqrt{2L} \varepsilon_n }{\delta_{k_n} }$ for some constant $L$, gives
\begin{equation*}
\P \left( \norm{f_n - \E f_n }_1 \geq \frac{1}{\delta_{k_n}}  \left(  \sqrt{2L} \varepsilon_n + \sqrt{ \frac{k_n}{n} } \right) \right)  \leq e^{-Ln \varepsilon_n^2 }   .
\end{equation*}
Since $k_n \leq c n \varepsilon_n^2 $ for some constant $c>0$, we have that for all $n \geq 1$,
\begin{equation}
\P \left( \norm{f_n - \E f_n}_1  \geq M \frac{\varepsilon_n }{\delta_{k_n}} \right) \leq e^{- L n \varepsilon_n^2}
\label{exp}
\end{equation}
for some constant $M = M(L,c)$ large enough.

\begin{proof}[Proof of Theorem \ref{contraction thm}]
Following the proof of Theorem 2.1 in \citep{GhGhVV} almost exactly line by line, but using formula \eqref{eq12} for the posterior distribution in the inverse setting, we recover an analogous theorem for the sampling model \eqref{eq1}. In particular, it is sufficient to construct tests (indicator functions) $\phi_n = \phi_n (Y ; f_0)$ such that
\begin{equation}
\E_{f_0} \phi_n \rightarrow 0 , \quad \quad \quad \quad  \sup_{f \in \mathcal{P}_n : \norm{f - f_0}_1 \geq M \xi_n}  \E_f  (1-\phi_n) \leq  e^{-(C+4)n \varepsilon_n^2}   ,
\label{eq4}
\end{equation}
where the constant $C>0$ matches that in \eqref{small} (the analogue of (2.4) in \citep{GhGhVV} for \eqref{eq1}). Recall that we are testing the hypotheses \eqref{eq2}.

We can now consider the plug-in test $\phi_n (Y ) = 1 \left\{ \norm{f_n - f_0}_1 \geq M_0 \xi_n \right\}$, where the constant $M_0$ is to be selected below. Recall that we have assumed that the contraction rate $\xi_n$ satisfies $\frac{\varepsilon_n}{\delta_{k_n}} \leq c \xi_n$ for some $c>0$ and all $n\geq 1$. The type-I error satisfies
\begin{equation*}
\begin{split}
\E_{f_0} \phi_n & = \P_{f_0} ( \norm{f_n - f_0}_1 \geq M_0 \xi_n  )  \\
& \leq \P_{f_0} ( \norm{f_n - \E_{f_0} f_n}_1 \geq M_0 \xi_n - \norm{\E_{f_0} f_n - f_0}_1  ).
\end{split}
\end{equation*}
By hypothesis, the bias of $f_0$ satisfies $ \norm{P_{k_n} (f_0) - f_0}_1 \leq D \xi_n $ for some $D > 0$. Letting $L_1>0$ be some constant, we can take $M_0$ sufficiently large so that applying \eqref{exp} gives
\begin{equation*}
\E_{f_0} \phi_n \leq \P_{f_0} \left( \norm{f_n - \E_{f_0} f_n}_1 \geq (M_0-D) \xi_n  \right) \leq  e^{-L_1 n\varepsilon_n^2 } \rightarrow 0 
\end{equation*}
as $n \rightarrow \infty$.

Now consider $f \in \mathcal{P}_n$ such that $\norm{f - f_0}_1 \geq M \xi_n$. Letting $L_2 > 0$ be some constant, we can pick $M$ sufficiently large so that applying the triangle inequality and \eqref{exp},
\begin{equation*}
\begin{split}
\E_f (1 - \phi_n ) & = \P_f (\norm{f_n - f_0}_1 \leq M_0 \xi_n )  \\
& \leq \P_f ( \norm{f_0 - f}_1 - \norm{f - \E f_n}_1 - \norm{ \E f_n - f_n}_1 \leq M_0 \xi_n  )  \\
& \leq \P_f ( ( M -C - M_0)\xi_n  \leq \norm{ \E f_n - f_n}_1  )  \leq e^{-L_2 n\varepsilon_n^2} ,
\end{split}
\end{equation*}
since by assumption $\sup_{f \in \mathcal{P}_n} \norm{f - \E f_n}_1 \leq C_2 \xi_n$. This verifies \eqref{eq4}.
\end{proof}

\section{Other proofs}

Before proceeding, we recall some facts that will be used when applying Theorem \ref{contraction thm} to the examples presented in Section \ref{main}. Recall that both the sieve and Gaussian priors of Sections \ref{sieve} and \ref{Gaussian} are defined directly in the spectral basis $\{ e_k \}$. For simplicity, we assume below that the singular values $\{ \rho_k \}$ are arranged in decreasing order so that the ill-posedness factor \eqref{delta} takes the simple form $\delta_k = \rho_k$.

Establishing contraction results in these cases therefore reduces to verifying the conditions of Theorem \ref{contraction thm}: the bias conditions on the prior \eqref{bias} and true parameter $f_0$, the small-ball condition \eqref{small} and balancing the rate \eqref{rate}. Recall also that in the mildly ill-posed case (Condition \ref{Cond1} with regularity $\alpha$), it is optimal to balance the terms in \eqref{rate} so that we take resolution level $k_n \simeq n \varepsilon_n^2$ yielding contraction rate $\xi_n \simeq n^\alpha \varepsilon_n^{2\alpha+1}$. In the severely ill-posed case, \eqref{rate} is generally a strict inequality, which must be verified in practice.

\subsection{Proofs of Section \ref{sieve} (Sieve priors)}

\begin{proof}[Proof of Proposition \ref{sieve1}]
By hypothesis, the true regression function takes the form $f_0 = \sum_{k=1}^{m_0} f_{0,k} e_k$ for some $m_0 \in \mathbb{N}$. We first verify the small-ball condition \eqref{small}. Let $f$ be a finite series generated from $\Pi$, conditionally on $M = m_0$. As noted in Section \ref{linear inverse problems}, since $A$ satisfies Condition \ref{Cond1}, it is sufficient to prove \eqref{small2} to establish \eqref{small}. Therefore,
\begin{equation}
\begin{split}
\P \left( \norm{f - f_0}_{H^{-\alpha}} \leq \varepsilon_n \right) & = \P \left( \sum_{k=1}^{m_0} |f_k - f_{0,k} |^2 (1+k^2)^{-\alpha}  \leq \varepsilon_n^2 \right)   \\
& \geq \P \left( |f_k - f_{0,k} |^2 (1+k^2)^{-\alpha} \leq \frac{\varepsilon_n^2}{m_0},  \text{ for } k=1,...,m_0    \right)  \\
& = \prod_{k=1}^{m_0} \P \left( |f_k - f_{0,k} |\leq \frac{\varepsilon_n (1+k^2)^{\alpha /2} }{ \sqrt{m_0} } \right)
\label{eq3}
\end{split}
\end{equation}
by the independence of the $f_k$'s. Now if $X$ is complex-valued with density $q : \mathbb{C} \rightarrow [0,\infty)$ satisfying Condition \ref{Cond5}, then for all $z \in \mathbb{C}$ and $ t > 0$,
\begin{equation}
\begin{split}
\P \left(  |X - z| \leq t \right) & \geq \int_0^t \int_0^{2\pi}  D e^{-d |z + r e^{i \theta}|^w}  dr \, d\theta  \\
& \geq 2\pi D \int_0^t  e^{-d \left( |z| + r \right)^w } dr  \geq 2\pi D t e^{-d \left( |z| +t \right)^w }  .
\label{lowerbound}
\end{split}
\end{equation}
If $X$ is real-valued, then the same estimate holds without the $\pi$ term; we shall therefore stick to the real-valued case, but note that everything below holds also in the complex case with slightly different constants. Let $\alpha_{n,k} = \frac{\varepsilon_n  (1+k^2)^{\alpha /2} }{\sqrt{m_0}}$ and note that for fixed $k$, $\alpha_{n,k} \rightarrow 0$ as $n \rightarrow \infty$ since $\varepsilon_n \rightarrow 0$. Thus there exists $E>0$ such that $\alpha_{n,k} \leq E$ for all $1 \leq k \leq m_0$ and $n \geq 1$. Using \eqref{lowerbound}, we lower bound the right-hand side of \eqref{eq3} by
\begin{equation*}
\begin{split}
& \prod_{k=1}^{m_0} 2D \frac{\alpha_{n,k}}{\tau_k} e^{-d \tau_k^{-w} (|f_{0,k}| + \alpha_{n,k})^w}   \\
& \geq  C_1 \exp \left( \sum_{k=1}^{m_0} \log \left( \frac{\alpha_{n,k}}{\tau_k} \right)  - d \sum_{k=1}^{m_0} \tau_k^{-w} 2^{w-1} \left( |f_{0,k}|^w + \alpha_{n,k}^w \right) \right)   \\
& \geq C_2 \exp \left( m_0 \log \varepsilon_n   + \sum_{k=1}^{m_0} \log \frac{(1+k^2)^{\alpha/2} }{\tau_k}    \right)   \\
& \geq  C_3  e^{C_4 \log \varepsilon_n},
\end{split}
\end{equation*}
where we have used that $(a + b)^w \leq 2^{w-1} (a^w + b^w)$ for $a,b\geq 0$ and $w \geq 1$. Now since $m_0$ is fixed and $h (m_0) > 0$ by assumption,
\begin{equation*}
\Pi (f \in \mathcal{P} : \norm{A f - A f_0}_2 \leq \varepsilon_n ) \geq h (m_0) C_3 e^{C_4 \log \varepsilon_n } \geq  e^{C_5 \log \varepsilon_n}
\end{equation*}
for some constant $C_5 >0$. The choice $\varepsilon_n = \left( \frac{\log n}{n} \right)^{1/2}$ then satisfies \eqref{small}.

Consider now the bias constraint \eqref{bias}. Take $k_n$ to be an integer satisfying $L_1 n \varepsilon_n^2 \leq k_n \leq  L_2 n \varepsilon_n^2$ for some constants $L_1, L_2$, and let $\mathcal{P}_n = \{ f \in \H_1 : f = \sum_{k=1}^{k_n} f_k e_k  \}$. By the assumptions on $h$, we have $\Pi (\mathcal{P}_n^c ) \leq C e^{-b k_n} \leq e^{-Ln\varepsilon_n^2}$, where $L$ is a constant that can be made arbitrarily large by choosing $L_1$ sufficiently large. Now for all $f \in \mathcal{P}_n$, we have the trivial bias result $\norm{f - P_{k_n} (f)}_1 = 0$, so that choosing $L$ large enough to match the constant used to establish \eqref{small} above, we verify \eqref{bias}. Finally, for the true function $f_0$ the bias condition follows immediately since $\norm{f_0 - P_{k_n} f_0 }_1 = 0$ for $k_n \geq m_0$. Applying Theorem \ref{contraction thm} with
\begin{equation*}
\xi_n = \frac{\varepsilon_n}{\delta_{k_n}} \leq C \varepsilon_n k_n^\alpha = C' n^\alpha \varepsilon_n^{2\alpha+1} = C' \frac{ (\log n)^{\alpha + 1/2} }{\sqrt{n}}
\end{equation*}
completes the proof.
\end{proof}

\begin{proof}[Proof of Proposition \ref{sieve3}]
By the triangle inequality
\begin{equation*}
\norm{f - f_0}_{H^{-\alpha}} \leq \norm{f - P_{j_n} (f_0) }_{H^{-\alpha}} + \norm{P_{j_n} (f_0) - f_0 }_{H^{-\alpha}}  ,
\end{equation*}
where $j_n$ is to be selected below. Since $f_0 \in H^\gamma$,
\begin{equation*}
\norm{P_{j_n} (f_0) - f_0 }_{H^{-\alpha}}^2   = \sum_{k= j_n+1}^\infty | f_{0,k} |^2 (1 + k^2)^{-\alpha}   \leq C  j_n^{-2 (\alpha + \gamma)} \norm{f_0}_{H^\gamma}^2 .
\end{equation*}
Taking $j_n \simeq \varepsilon_n^{-\frac{1}{\alpha + \gamma}}$ gives
\begin{equation*}
\P \left( \norm{f - f_0}_{H^{-\alpha}} \leq \varepsilon_n  \right)  \geq \P \left( \norm{P_{j_n} (f_0) - f}_{H^{-\alpha}} \leq c' \varepsilon_n \right)
\end{equation*}
for some $c' > 0$. Let $\alpha_{n,k} = \frac{\varepsilon_n (1+k^2)^{\alpha /2} }{\sqrt{j_n} }$, and suppose that $f$ is a finite series in the $\{ e_k \}$ basis of degree $j_n$. Then using \eqref{lowerbound} as in the proof of Proposition \ref{sieve1},
\begin{equation}
\begin{split}
\P \left(  \norm{f - P_{j_n} (f_0)}_{H^{-\alpha}} \leq \varepsilon_n \right)  &  \geq \prod_{k= 1}^{j_n} D' \alpha_{n,k} \tau_k^{-1} e^{-d \tau_k^{-w} ( |f_{0,k}| + \alpha_{n,k} )^w }  \\
& \geq \exp \left( j_n \log C_1 + \sum_{k=1}^{j_n} \log \left( \frac{\alpha_{n,k}}{\tau_k} \right)  \right.  \\
& \left. \quad \quad \quad   - C_2 \sum_{k=1}^{j_n} \tau_k^{-w}  \left( |f_{0,k}|^w + \alpha_{n,k}^w \right) \right) .
\label{eq7}
\end{split}
\end{equation}
By the hypotheses on $\{ \tau_k \}$, $\sum_{k=1}^{j_n} \log \left( \tau_k^{-1} (1+k^2)^{\alpha /2} \right) \geq - E_1 j_n \log j_n$ for some $E_1>0$. Since $f_0 \in H^\gamma$, we have $|f_{0,k}| \leq  (1+k^2)^{-\gamma/2} \norm{f_0}_{H^\gamma} \leq C(f_0) k^{-\gamma}$ for all $k \geq 1$. Moreover, for $k \leq j_n$, note that $\alpha_{n,k} \simeq j_n^{-\alpha - \gamma -1/2} (1+k^2)^{\alpha /2} \leq E_2 j_n^{-\gamma -1/2}$. Substituting these bounds into \eqref{eq7} and using that $\tau_k \geq B_3 (1+k^2)^{-\gamma_0 /2} (\log k)^{-1/w}$ yields the lower bound
\begin{equation*}
\begin{split}
& \exp \left(  C_3 j_n \log \varepsilon_n - C_4 j_n \log j_n + \sum_{k=1}^{j_n} \log \left( \frac{(1+k^2)^{\alpha/2} }{\tau_k} \right)  \right.  \\
& \left. \quad \quad \quad \quad \quad \quad   - C_5 \sum_{k=1}^{j_n} \tau_k^{-w} ( k^{-\gamma w}  +  j_n^{-(\gamma + 1/2)w} )  \right)   \\
& \geq  \exp \left( -C_6 j_n \log j_n - E_1 j_n \log j_n - C_7 \sum_{k=1}^{j_n} \log k  \right)  \geq \exp \left( -C_8 j_n \log j_n  \right) ,
\end{split}
\end{equation*}
where we have also used that $\log \varepsilon_n \simeq - \log j_n$. In conclusion, using the lower bound on $h$, we have shown that
\begin{equation*}
\P (\norm{f - f_0}_{H^{-\alpha}} \leq \varepsilon_n ) \geq h (j_n) e^{-C_9 j_n \log j_n}  \geq  e^{-C_{10} \varepsilon_n^{-1/(\alpha + \gamma)} \log \frac{1}{\varepsilon_n} }.
\end{equation*}
Condition \eqref{small} is then satisfied by the choice $\varepsilon_n = \left( \frac{\log n}{n} \right)^{ \frac{\alpha + \gamma}{2\alpha + 2\gamma +1}}$.

Again take $\mathcal{P}_n = \{ f = \sum_{k = 1}^{k_n} f_k e_k \}$, where $k_n$ is an integer satisfying $L_1 n \varepsilon_n^2 \leq k_n \leq L_2 n \varepsilon_n^2$. Proceeding as above, we get $\norm{f - P_{k_n} (f) }_1 = 0$ for all $f \in \mathcal{P}_n$ and $\Pi (\mathcal{P}_n^c ) \leq e^{- L n \varepsilon_n^2}$ for a suitable constant $L$, thereby verifying \eqref{bias}. This yields contraction rate
\begin{equation*}
\xi_n = \frac{\varepsilon_n }{\delta_{k_n}} \leq C \varepsilon_n (n \varepsilon_n^2 )^\alpha =  C (\log n)^\frac{(2\alpha + 1)(\alpha + \gamma)}{2\alpha + 2\gamma +1} n^{-\frac{\gamma}{2\alpha + 2\gamma + 1} }  .
\end{equation*}
Finally, for the true regression element $f_0$,
\begin{equation*}
\norm{f_0 - P_{k_n}(f_0) }_1  \leq C k_n^{-\gamma} \norm{f_0}_{H^\gamma} \simeq (n \varepsilon_n^2)^{-\gamma} = ( \log n)^{-\frac{2\gamma (\alpha + \gamma)}{2\alpha + 2\gamma +1}} n^{-\frac{\gamma}{2\alpha + 2\gamma +1}} \leq \xi_n
\end{equation*}
as required. Applying Theorem \ref{contraction thm} completes the proof.
\end{proof}

\begin{proof}[Proof of Proposition \ref{sieve2}]
By exactly the same reasoning as in the proof of Proposition \ref{sieve1}, \eqref{small} is satisfied with $\varepsilon_n = \sqrt{ (\log n) / n}$. Take $k_n$ to be an integer satisfying $ ( L_1 \log n)^{1 / (\beta+1)} \leq k_n \leq  ( L_2 \log n )^{1/(\beta+1)}$ for some constants $L_1$ and $L_2$. Again taking $\mathcal{P}_n = \left\{ f = \sum_{k = 1}^{k_n} f_k e_k \right\}$ yields $\Pi (\mathcal{P}_n^c)  \lesssim  e^{-b k_n^{\beta +1} } \leq e^{-L n \varepsilon_n^2}$ for some constant $L$ that can be made arbitrarily large by increasing $L_1$. This verifies \eqref{bias} and the bias condition on $f_0$ follows exactly as above. Since the bias in both cases is equal to 0 for sufficiently large $n$, we can apply Theorem \ref{contraction thm} with contraction rate
\begin{equation*}
\xi_n = \frac{\varepsilon_n}{\delta_{k_n} }  \leq  C \varepsilon_n (1+k_n^2)^{\alpha_0 /2} e^{c_0 k_n^\beta}  \leq C' \frac{ ( \log n)^{\frac{1}{2} + \frac{\alpha_0}{\beta +1}}  e^{c_0 (L_2 \log n)^{\beta / (\beta +1) } } }{\sqrt{n}}  = \frac{w_n}{\sqrt{n}}  .
\end{equation*}
\end{proof}

\begin{proof}[Proof of Proposition \ref{sieve4}]
The proof is similar to that of Proposition \ref{sieve3}, though we must notably keep more careful track of the constants involved due to the exponentiation resulting from the severe ill-posedness. If $A$ satisfies Condition \ref{Cond2}, consider the norm induced analogously to the Sobolev norm $H^{-\alpha}$ in the mildly ill-posed case:
\begin{equation*}
\norm{f}_A^2 := \sum_{k=1}^\infty |f_k|^2 (1+k^2)^{-\alpha_1} e^{-2c_0 k^\beta}  .
\end{equation*}
Taking $j_n^{ -(\alpha_1 + \gamma)} e^{-c_0 (j_n+1)^\beta} \simeq \varepsilon_n$ and using the same truncation argument as in the proof of Proposition \ref{sieve3} gives $\norm{P_{j_n} (f_0) - f_0}_A \leq c \varepsilon_n$ for some constant $c>0$. Thus for $f$ a finite series of degree $j_n$ in the $\{ e_k \}$ basis (and using that $q$ standard normal satisfies Condition \ref{Cond5} for $w=2$), we can lower bound the probability $\P \left( \norm{ P_{j_n} (f_0) - f}_A \leq c \varepsilon \right)$ by
\begin{equation}
\exp \left( j_n \log C_1 + \sum_{k=1}^{j_n} \log \left( \frac{\tilde{\alpha}_{n,k}}{\tau_k} \right) - C_2 \sum_{k=1}^{j_n} \tau_k^{-2} \left( |f_{0,k}|^2 + \tilde{\alpha}_{n,k}^2 \right) \right)   ,
\label{eq10}
\end{equation}
where $\tilde{\alpha}_{n,k} = j_n^{-1/2} \varepsilon_n (1+k^2)^{\alpha_1/2} e^{c_0 k^\beta} \leq  C j_n^{-\gamma - 1/2} e^{c_0 (k^\beta - (j_n+1)^\beta)} \leq C j_n^{-\gamma - 1/2}$ for $k \leq j_n$ and by the definition of $j_n$. Now since $\tau_k = (1+k^2)^{-\frac{\delta}{2} - \frac{1}{4}}$ and $f_0 \in H^\gamma$, we have that
\begin{equation*}
\sum_{k=1}^{j_n} \log ( \tau_k^{-1} \tilde{\alpha}_{n,k} ) \geq j_n \log \varepsilon_n - \frac{1}{2} j_n \log j_n \geq E_1 j_n^{\beta+1}  ,
\end{equation*}
\begin{equation*}
\sum_{k=1}^j \tau_k^{-2} | f_{0,k}|^2 = \sum_{k=1}^j k^{2\delta + 1 - 2\gamma } k^{2\gamma} |f_{0,k}|^2 \leq j^{(2\delta - 2\gamma +1) \vee 0} \norm{f_0}_{H^\gamma}^2 ,
\end{equation*}
\begin{equation*}
\sum_{k=1}^{j_n} \tau_k^{-2} \tilde{\alpha}_{n,k}^2 \leq C j_n^{-2 (\gamma + 1/2) } \sum_{k=1}^{j_n} k^{2(\delta+1/2) } \leq E_2 j_n^{1 + 2 (\delta -\gamma)}
\end{equation*}
for some constants $E_1 ,E_2 > 0$. Substituting these into \eqref{eq10} gives the lower bound $\exp \left( -C_3 j_n^{1 + \theta} \right)$, where $\theta = \max \left( \beta , 2(\delta  - \gamma) \right)$. In conclusion, the small ball probability satisfies
\begin{equation*}
\P \left( \norm{ Af - Af_0}_2 \leq \varepsilon_n \right) \geq h (j_n)  e^{-C_3 j_n^{1 + \theta} }  \geq B_1 e^{-C_4 j_n^{1 + \theta} } \geq e^{- C_5 \left(  \log \frac{1}{\varepsilon_n} \right)^{\frac{1 + \theta}{\beta}} } ,
\end{equation*}
so that \eqref{small} is satisfied by the choice $\varepsilon_n  =  (\log n)^\frac{1 + \theta}{2\beta} n^{-1/2}$.

Take $k_n$ to be an integer satisfying $ ( a_1 \log n)^{1 /\beta } \leq k_n \leq  ( a_2 \log n )^{1/\beta}$ for some constants $a_1$ and $a_2$. For this choice of $k_n$, \eqref{rate} is verified for the choice $\xi_n = (\log n)^{-\frac{ \delta - \theta /2 }{\beta} }$:
\begin{equation*}
\frac{\varepsilon_n }{\delta_{k_n} } \leq D \varepsilon_n (1+k_n^2)^{\alpha_0 /2 } e^{c_0 k_n^\beta} \leq  D' (\log n)^{\frac{2\alpha_0 + \beta + 1}{2\beta}  } e^{( c_0 a_2 - 1/2)  \log n}  = o \left(  \xi_n \right)
\end{equation*}
as long as we take $c_0 a_2 < 1/2$. Recall that for $f \in \supp (\Pi_m)$ we have Karhunen-Lo\`eve expansion $f = \sum_{k=1}^m \tau_k \zeta_k e_k$, where $\{ \zeta_k \}$ are i.i.d. standard normal random variables. Thus for any such $f$, we can bound the bias by $\norm{P_{k_n}(f) - f}_1^2 \leq \sum_{k=k_n+1}^\infty \tau_k^2 \zeta_k^2$. We verify \eqref{bias} by applying Borell's inequality in a similar fashion to that used in the proof of Theorem \ref{contraction thm}. Using the same notation, write $\norm{P_{k_n} (f) - f}_1 = \sup_{h \in B_0} G_n (h)$, where $B_0$ is a weak*-dense subset of $\{ h \in \H_1 : \norm{h}_1 \leq 1 \}$ and $G_n$ is the Gaussian processes
\begin{equation*}
G_n (h) = \langle h , P_{k_n} (f) - f \rangle_1 = \sum_{k = k_n+1}^\infty  \tau_k \zeta_k \langle h , e_k \rangle_1  .
\end{equation*}
We can control the bias and weak variance terms as follows. Using that $\sum_{k=k_n+1}^\infty k^{-w} \leq k_n^{1-w} / (w-1)$ for $w > 1$ and applying Jensen's inequality to the bias gives $\E \norm{P_{k_n} (f) - f}_1 \leq \sqrt{ \sum_{k=k_n+1}^\infty   \tau_k^2 } \leq k_n^{-\delta}$. For the variance, note that for any $h \in B_0$,
\begin{equation*}
\E G_n (h)^2 =  \sum_{k = k_n+1}^\infty \tau_k^2 | \langle h , e_k \rangle_1 |^2  \leq \tau_{k_n+1}^2 \norm{h}_1^2 \leq \tau_{k_n}^2 \simeq  k_n^{-2\delta -1}  .
\end{equation*}
Using these bounds, apply Borell's inequality for the supremum of a Gaussian process as in \eqref{Borell} with $x = \sqrt{2L n \varepsilon_n^2 k_n^{-2\delta -1}}$ to obtain
\begin{equation}
\P \left(  \norm{P_{k_n} (f) - f}_1 \geq   L' \left( k_n^{-\delta} + \sqrt{n\varepsilon_n^2} k_n^{-\delta - 1/2}  \right) \right)  \leq e^{-Ln\varepsilon_n^2} ,
\label{Borell2}
\end{equation}
where $L'$ is some constant that increases with $L$. Substituting in our choices of $\varepsilon_n$ and $k_n$ yields that for $n \geq N$,
\begin{equation*}
\P \left(  \norm{ P_{k_n} (f) - f}_1 \geq M(N,L)  (\log n)^{-\frac{2\delta - \theta}{2\beta}} \right)  \leq e^{-Ln\varepsilon_n^2}  ,
\end{equation*}
where the constant $M$ increases with $L$. Let $\mathcal{P}_n = \{ f \in \H_1 : \norm{P_{k_n}(f) - f}_1 \leq M \xi_n \}$ for a sufficiently large constant $M$, so that $\Pi (\mathcal{P}_n^c ) \leq e^{-Ln\varepsilon_n^2}$ for $\xi_n = (\log n)^{-\frac{\delta - \theta /2}{\beta}}$. This is satisfied by our above choice of $\varepsilon_n$ and so, choosing $L$ sufficiently large to match the constant obtained in the small-ball probability above, this verifies \eqref{bias}. 
Lastly, as $f_0 \in H^\gamma$, then $\norm{P_{k_n}(f_0) - f_0}_1 \leq C k_n^{-\gamma} = O(\xi_n)$ exactly as above. Apply Theorem \ref{contraction thm} to finish.
\end{proof}

\subsection{Proofs of Section \ref{Gaussian} (Gaussian priors)}

The small-ball asymptotics of a Gaussian measure in a Hilbert space have been exactly characterized by Sytaya \citep{Sy} and using the techniques of large deviations in \citep{DeMaZe}. However, while exact, the asymptotic expression is rather complicated and relies on the solution of an implicit equation that does not yield an explicit rate in terms of the radius of the shrinking ball. We therefore obtain suitable lower bounds using either direct lower bound methods \citep{HJShDu} or the link with the metric entropy of the unit ball of the RKHS \citep{KuLi} (both of which yield the same result).

As mentioned above, a Gaussian distribution has support equal to the closure of its RKHS $\mathbb{H}$ and so posterior consistency is only achievable when $Af_0$ is contained in this set. Since $f$ is a Gaussian random variable in a Hilbert space with Karhunen-Lo\`eve expansion $f =^d \sum_k \tau_k \zeta_k e_k$, where the $\{\zeta_k \}$ are i.i.d. standard normal random variables, we can easily characterize its RKHS in terms of ellipsoids (see \citep{VVVZ} for more details). Letting $\mathbb{H}_f$ denote the RKHS of $f$, we have that if $a = \sum_k a_k e_k$, then
\begin{equation*}
a \in \mathbb{H}_f    \quad \Leftrightarrow \quad \norm{a}_{\mathbb{H}_f}^2 := \sum_{k=1}^\infty \frac{a_k^2}{\tau_k^2} < \infty  .
\end{equation*}
The RKHS norm therefore consists of a weighted $\ell_2$-norm, weighting the eigenvectors of $\Lambda$ with the inverse of its eigenvalues. Recall that the concentration function of a Gaussian random variable $W$ in a Banach space $(\mathbb{B} , \norm{\cdot} )$ with RKHS $\mathbb{H}$ is defined as
\begin{equation*}
\phi_{w_0} (\varepsilon ) : =      \inf_{h \in \mathbb{H} : \norm{h - w_0} < \varepsilon} \norm{h}_{\mathbb{H}}^2 - \log \P (\norm{W} < \varepsilon )  .
\end{equation*}
By Theorem 2.1 of \citep{VVVZ2}, choosing $\varepsilon_n$ to satisfy $\phi_{w_0} (\varepsilon_n) \leq n \varepsilon_n^2$ is sufficient to obtain the lower bound $\P (\norm{W- w_0} \leq 2 \varepsilon_n ) \geq e^{-n\varepsilon_n^2}$, and consequently establish \eqref{small}.

We firstly establish upper bounds for the concentration function $\phi_{Af_0}$ of the Gaussian random variable $Af$. When the prior oversmooths the true parameter, the approximation error in $\phi_{Af_0} (\varepsilon)$ dominates as $\varepsilon \rightarrow 0$, whereas when it undersmooths the centred small ball probability dominates. This is quantified by the following lemma.

\begin{lemma}\label{concentration lemma}
Suppose that $f \sim N(0 , \Lambda)$, where $\Lambda$ satisfies Condition \ref{Cond4}, and let $f_0 \in H^\gamma (\H_1)$ for some $\gamma >0$. Then $Af$ is Gaussian random variable in the Hilbert space $\H_2$. If $A$ satisfies Condition \ref{Cond1}, then $Af$ has RKHS equal to $H^{\alpha + \delta + 1/2} (\H_2)$ (where $H^s (\H_2)$ is the Sobolev scale with respect to $\{ g_k \}$) and the concentration function of $Af$ satisfies
\begin{equation*}
\phi_{Af_0} ( \varepsilon ) \leq  C \left\{
     \begin{array}{lr}
       \varepsilon^{-\frac{2\delta - 2\gamma + 1}{\alpha + \gamma}} &  \text{ if } \gamma \leq \delta \\
       \varepsilon^{- \frac{1}{\alpha + \delta}}  & \text{ if } \gamma \geq \delta
     \end{array}
   \right.
\end{equation*}
as $\varepsilon \rightarrow 0$ for some $C = C(\alpha,\delta,f_0)$. If $A$ satisfies Condition \ref{Cond2} then $Af$ has RKHS equal to
\begin{equation*}
\mathbb{H}_{Af} = \left\{ b = \sum_{k=1}^\infty b_k g_k \in \H_2 : \norm{b}_{\mathbb{H}_{Af}}^2 = \sum_{k=1}^\infty  b_k^2 (1+k^2)^{\alpha_0 + \delta + 1/2} e^{2c_0 k^\beta} < \infty \right\}
\end{equation*}
and the concentration function of $Af$ satisfies
\begin{equation*}
\phi_{Af_0} ( \varepsilon ) \leq  C \left\{
     \begin{array}{lr}
       \left( \log \frac{1}{\varepsilon} \right)^\frac{ 2\delta - 2\gamma + 1}{\beta} &  \text{ if }  \gamma + \frac{\beta}{2} \leq \delta  \\
       \left(  \log \frac{1}{\varepsilon} \right)^{1 + 1 / \beta}  & \text{ if }   \gamma + \frac{\beta}{2} \geq \delta
     \end{array}
   \right.
\end{equation*}
as $\varepsilon \rightarrow 0$ for some $C = C(\alpha_0,\beta,\delta,f_0)$.
\end{lemma}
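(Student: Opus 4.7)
The plan is to use the explicit Karhunen-Lo\`eve expansion $Af = \sum_k \rho_k \tau_k \zeta_k g_k$ with $\zeta_k$ i.i.d.\ standard normal, which realises $Af$ as a centred Gaussian random element of $\H_2$ with diagonal covariance eigenvalues $\rho_k^2 \tau_k^2$ in the orthonormal basis $\{ g_k \}$. The stated RKHS characterisations then follow immediately from the standard formula $\norm{b}_{\mathbb{H}_{Af}}^2 = \sum_k b_k^2/(\rho_k^2 \tau_k^2)$: in the mildly ill-posed case $\rho_k^2\tau_k^2 \simeq (1+k^2)^{-\alpha-\delta-1/2}$ yields $\mathbb{H}_{Af} = H^{\alpha+\delta+1/2}(\H_2)$, while in the severely ill-posed case the combined polynomial and exponential weights give the weighted $\ell^2$-space displayed (the $\alpha_0$-versus-$\alpha_1$ mismatch in Condition \ref{Cond2} is absorbed into an equivalent norm, as standard). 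I then bound the concentration function by splitting it as $\phi_{Af_0}(\varepsilon) = \inf_{h: \norm{h-Af_0}_2 < \varepsilon} \norm{h}_{\mathbb{H}_{Af}}^2 - \log \P(\norm{Af}_2 < \varepsilon)$ and estimating the two pieces separately.

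For the approximation piece I take the plug-in $h = P_j(Af_0) = \sum_{k=1}^j \rho_k f_{0,k} g_k$. The key simplification is that its RKHS norm collapses to $\sum_{k=1}^j f_{0,k}^2/\tau_k^2 \simeq \sum_{k=1}^j f_{0,k}^2 (1+k^2)^{\delta+1/2}$: the $\rho_k^2$ factors arising from the coefficients of $P_j(Af_0)$ cancel exactly with those in the RKHS weights, so neither the polynomial nor the exponential $\rho_k$-prefactor appears. The truncation error $\sum_{k>j}\rho_k^2 f_{0,k}^2$ is controlled via $f_0 \in H^\gamma$ together with the decay of $\rho_k$: in the mild case it is of order $j^{-2(\alpha+\gamma)}\norm{f_0}_{H^\gamma}^2$, forcing $j \simeq \varepsilon^{-1/(\alpha+\gamma)}$; in the severe case the factor $e^{-2c_0(j+1)^\beta}$ dominates, forcing $j \simeq (c_0^{-1}\log(1/\varepsilon))^{1/\beta}$. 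Feeding these choices back into the RKHS sum, and distinguishing whether $\gamma > \delta + 1/2$ (in which case the sum stays bounded uniformly by $\norm{f_0}_{H^{\delta+1/2}}^2$), yields the approximation estimates $\varepsilon^{-(2\delta-2\gamma+1)/(\alpha+\gamma)}$ and $(\log(1/\varepsilon))^{(2\delta-2\gamma+1)/\beta}$ respectively.

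For the centred small-ball term I will invoke the standard small-ball asymptotics for diagonal Hilbert-space Gaussians, either through the Kuelbs-Li link to the metric entropy of the RKHS unit ball or via a direct Laplace-transform calculation. Polynomial eigenvalues $\lambda_k \simeq k^{-(\alpha+\delta+1/2)}$ give $-\log \P(\norm{Af}_2 < \varepsilon) \simeq \varepsilon^{-1/(\alpha+\delta)}$ in the mild case, while in the severe case the critical index $K$ determined by $\lambda_K \simeq \varepsilon$ is of order $(\log(1/\varepsilon))^{1/\beta}$ and produces $-\log \P(\norm{Af}_2 < \varepsilon) \simeq (\log(1/\varepsilon))^{1+1/\beta}$. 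Comparing exponents then gives the stated case split: the approximation term dominates in the mild case exactly when $\gamma \leq \delta$ (an elementary inequality, with equality at $\gamma = \delta$), and in the severe case exactly when $(2\delta - 2\gamma + 1)/\beta \geq 1 + 1/\beta$, i.e.\ when $\gamma + \beta/2 \leq \delta$; the small-ball term dominates in the opposite regimes, giving the second branch of each bound. I expect the main technical obstacle to be a clean centred small-ball estimate in the severely ill-posed case: the combined exponential-polynomial eigenvalue structure makes constants delicate to track, and the authors' own later remark that the undersmoothing gap in Proposition~\ref{Gaussian2} arises precisely from this small-ball bound (via the companion Lemma~\ref{RKHS lemma}) confirms that this is the tight point of the argument.
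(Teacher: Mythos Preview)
Your proposal is correct and follows essentially the same route as the paper: both use the Karhunen--Lo\`eve representation to identify the RKHS, take the truncation $h_j = \sum_{k\le j}\rho_k f_{0,k}g_k$ for the approximation term (exploiting the cancellation of the $\rho_k$ factors so that $\norm{h_j}_{\mathbb{H}_{Af}}^2 = \sum_{k\le j} f_{0,k}^2/\tau_k^2$), choose $j$ from the truncation error, and then compare with the centred small-ball rate. The only cosmetic difference is that for the mildly ill-posed small-ball term the paper invokes the explicit lower bound of Hoffman-J{\o}rgensen, Shepp and Dudley rather than the Kuelbs--Li entropy link, while in the severely ill-posed case both appeal to Kuelbs--Li together with the companion entropy bound of Lemma~\ref{RKHS lemma}; the resulting exponents and case splits coincide.
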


\begin{proof}
It is obvious that $Af$ is a Gaussian element in $\H_2$ with $Af \sim N( 0 , A \Lambda A^* )$. By Condition \ref{Cond4}, $A \Lambda A^*$ has eigenvectors $\{ g_k \}$ with corresponding eigenvalues $\{ \tau_k^2 \rho_k^2 \}$. Consider firstly the case where $A$ satisfies Condition \ref{Cond1}. Using the above remark about Gaussian measures in Hilbert spaces, we have that for any $b = \sum_{k=1}^\infty b_k g_k \in \H_2$,
\begin{equation*}
\norm{b}_{\mathbb{H}_{Af}}^2 = \sum_{k=1}^\infty \frac{b_k^2}{\tau_k^2 \rho_k^2} 
\simeq  \sum_{k=1}^\infty  b_k^2 (1 + k^2)^{ \alpha + \delta + 1/2 } = \norm{b}_{H^{\alpha + \delta + 1/2 } (\H_2) }^2 ,
\end{equation*}
so that $\mathbb{H}_{Af} = H^{\alpha + \delta + 1/2} (\H_2)$.

Letting $f_0 = \sum_{k=1}^\infty f_{0,k} e_k$, define $h_j =\sum_{k=1}^j \rho_k f_{0,k} g_k$ to be the projection of $Af_0$ onto its first $j$ coordinates in the conjugate basis $\{ g_k \}$. Then
\begin{equation*}
\norm{h_j - Af_0}_2^2 = \sum_{k = j+1}^\infty \rho_k^2 |f_{0,k}|^2 \leq C \sum_{k=j+1}^\infty (1+k^2)^{-\alpha} |f_{0,k}|^2 \leq C(f_0 , A) j^{-2\alpha - 2\gamma}  ,
\end{equation*}
since $f_0 \in H^\gamma$. Taking $j \simeq \varepsilon^{-1/(\alpha + \gamma)}$ gives $\norm{h_j - Af_0}_2 \leq \varepsilon$ and
\begin{equation*}
\norm{h_j}_{\mathbb{H}_{Af}}^2 \leq \sum_{k=1}^j  \tau_k^{-2} f_{0,k}^2  \leq C'(f_0,A) j^{(2\delta - 2\gamma + 1) \vee 0}    \simeq \varepsilon^{- \frac{2\delta - 2\gamma +1}{\alpha + \gamma} \wedge 0 }  ,
\end{equation*}
thereby giving a bound on the first term of $\phi_{Af_0}$. For the second term we use the explicit lower bound (4.5.2) from Example 4.5 in \citep{HJShDu}:
\begin{equation*}
\begin{split}
\P \left( \norm{Af}_2 < \varepsilon \right)  & = \P \left( \sum_{k=1}^\infty (1+k^2)^{-\alpha - \delta - 1/2} \zeta_k^2  < \varepsilon^2  \right)  \\
& \geq  B \varepsilon^{\rho (3-w)} \exp \left( -w (1+\rho)^\rho \varepsilon^{-2\rho} \right) ,
\end{split}
\end{equation*}
where $\zeta_k$ are i.i.d. standard normals, $B > 0$ is a constant, $w = \alpha + \delta + 1/2$ and $\rho = (2w-1)^{-1} = (2\alpha + 2\delta)^{-1}$. Using these values gives
\begin{equation*}
\phi_0 (\varepsilon)    \leq - \log B - \rho (3-w) \log \varepsilon + w (1 + \rho)^\rho  \varepsilon^{-2 \rho}  \leq C \varepsilon^{-1/(\alpha + \delta)}
\end{equation*}
as $\varepsilon \rightarrow 0$ for some constant $C = C(\alpha,\delta,B)$. Comparing these two rates, we see that the approximation term dominates when $\gamma \leq \delta$ while the centred small-ball term term dominates when $\gamma \geq \delta$, thus giving the desired form for $\phi_{Af_0} (\varepsilon)$.

In the case of Condition \ref{Cond2}, substituting in the lower bounds for the eigenvalues $\{ \rho_k \}$ gives the specified $\mathbb{H}_{Af}$. If we repeat the approximation argument above, taking $h_j$ with $j \simeq (\log \frac{1}{\varepsilon} )^{1/\beta}$, then $\norm{h_j - Af_0}_2 \leq \varepsilon$ and
\begin{equation*}
\norm{h_j}_{\mathbb{H}_{Af}}^2 \leq \sum_{k=1}^j |f_{0,k}|^2 (1+k^2)^{\delta + 1/2}  \leq C j^{ (2\delta - 2\gamma + 1) \vee 0 }  \simeq \left(  \log \frac{1}{\varepsilon}  \right)^{ \frac{ (2\delta - 2\gamma+1) \vee 0}{\beta} }  .
\end{equation*}
The centred small-ball probability can be dealt with using results on Gaussian processes that link this quantity to the metric entropy of the unit ball of the RKHS \citep{KuLi}. Applying Theorem 2 of \citep{KuLi} and using Lemma \ref{RKHS lemma} below, we get $\phi_0 (\varepsilon) \lesssim \left(  \log \frac{1}{\varepsilon} \right)^{1 + 1/\beta}$. It is also possible to derive this result using a careful rearrangement of the lower bounds proved in \citep{HJShDu}. Balancing these terms we have that this quantity dominates when $\delta \leq \gamma + \frac{\beta}{2}$ and the approximation term dominates otherwise, hence the result.
\end{proof}

\begin{lemma}\label{RKHS lemma}
Consider the RKHS $\mathbb{H}_{Af}$ of $Af$ under Condition \ref{Cond2} as described in Lemma \ref{concentration lemma}, and let $K_{Af}$ denote the unit ball of $\mathbb{H}_{Af}$. Then the covering number $N(K_{Af} , \norm{\cdot}_{H_2} , \epsilon )$ of $K_{Af}$ with the usual Hilbert space distance satisfies
\begin{equation*}
\log N( K_{Af} , \norm{\cdot }_{H_2} , \epsilon ) \lesssim \left( \log \frac{1}{\epsilon} \right)^{1 + 1/\beta} .
\end{equation*}
\end{lemma}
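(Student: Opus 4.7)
The plan is to identify $K_{Af}$ as an infinite-dimensional ellipsoid in $\H_2$ with semi-axes $a_k := (1+k^2)^{-(\alpha_0+\delta+1/2)/2} e^{-c_0 k^\beta}$, and then bound its metric entropy by a standard truncation-plus-volume argument. Since the factor $e^{-c_0 k^\beta}$ dominates the polynomial prefactor as $k \to \infty$, morally the ellipsoid has stretched-exponentially decaying axes, and this decay is what drives the $(\log(1/\epsilon))^{1+1/\beta}$ rate.

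First I would carry out a truncation. For $b = \sum_k b_k g_k \in K_{Af}$ and any $N \in \mathbb{N}$, monotonicity of $\{a_k\}$ yields
\[
\sum_{k > N} b_k^2 = \sum_{k > N} \frac{b_k^2}{a_k^2}\, a_k^2 \le a_{N+1}^2 \sum_{k} \frac{b_k^2}{a_k^2} \le a_{N+1}^2.
\]
Choosing $N = N(\epsilon) \simeq (\log(1/\epsilon))^{1/\beta}$ (absorbing the polynomial correction into the constant) ensures $a_{N+1} \le \epsilon/2$, so every $b \in K_{Af}$ lies within $\H_2$-distance $\epsilon/2$ of its projection onto the span of $g_1,\ldots,g_N$. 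The problem then reduces to covering, up to radius $\epsilon/2$, the $N$-dimensional ellipsoid $E_N \subset \R^N$ with semi-axes $a_1,\ldots,a_N$.

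Next I would apply the standard volume-ratio bound to $E_N$: the number of Euclidean balls of radius $\epsilon/2$ needed to cover $E_N$ is at most $\prod_{k=1}^N (1 + 4 a_k/\epsilon)$, up to a factor of the form $C^N$. Taking logarithms, the desired bound reduces to estimating $\sum_{k=1}^N \log(1 + a_k/\epsilon)$. By the defining choice of $N(\epsilon)$ one has $a_k / \epsilon \simeq e^{c_0(N^\beta - k^\beta)}$ times a polynomial factor in $N$, so each summand is at most $c_0(N^\beta - k^\beta) + O(\log N)$. Estimating the resulting Riemann sum by $\int_0^N (N^\beta - t^\beta)\, dt = \beta N^{\beta+1}/(\beta+1)$ gives
\[
\log N(K_{Af}, \|\cdot\|_2, \epsilon) \lesssim N^{\beta+1} + N \log N \lesssim (\log(1/\epsilon))^{(\beta+1)/\beta},
\]
which is the asserted bound. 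The only mildly delicate point is keeping track of how the polynomial prefactor in $a_k$ propagates through the choice of $N(\epsilon)$ and confirming that its $N\log N$ contribution is dominated by $N^{\beta+1}$; this is routine and I expect no conceptual obstacle.
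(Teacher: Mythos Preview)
Your proposal is correct and follows essentially the same truncation-plus-volume strategy as the paper: choose the cutoff $N \simeq (\log(1/\epsilon))^{1/\beta}$, bound the tail contribution, and then cover the resulting finite-dimensional set. The paper embeds $K_{Af}$ in a rectangular box $\prod_k [-Ce^{-c_0 k^\beta},\, Ce^{-c_0 k^\beta}]$ and covers the truncated cube with a regular lattice of mesh $\epsilon/(2\sqrt{N})$, whereas you work directly with the ellipsoid and apply the volume-ratio bound; the arithmetic (in particular the estimate $\sum_{k\le N} k^\beta \asymp N^{\beta+1}$) is identical and yields the same $(\log(1/\epsilon))^{1+1/\beta}$ rate.
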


\noindent Using this result and Theorem 2 of \citep{KuLi}, we obtain the bound $- \log \P ( \norm{Af}_2 < \varepsilon ) \leq C \left(  \log \frac{1}{\varepsilon} \right)^{1 + 1/\beta}$. This matches the bounds obtained in \citep{CaKePi} when considering the general setting of heat kernels ($\beta = 2$) on manifolds.

\begin{proof}
Writing $b = \sum_{k=1}^\infty b_k g_k$, we know that for any $b \in K_{Af}$ we have $|b_k| \leq C (1 + k^2)^{-\alpha_0 - \delta - 1/2} e^{-c_0 k^\beta} \leq C e^{-c_0 k^\beta}$, so that $K_{Af}$ is contained in the infinite rectangle
\begin{equation*}
\prod_{k=1}^\infty \left[ - C e^{-c_0 k^\beta}  ,  C  e^{-c_0 k^\beta}  \right].
\end{equation*}
Taking $J  = D ( \log \frac{1}{\epsilon} )^{1 / \beta}$ for a suitable constant $D$, we that for $k \geq J$, the width of the above intervals is smaller that $\epsilon /2$. Thus any point in the infinite rectangle is within $\epsilon /2$ of the finite dimensional cube $X = \prod_{k=1}^J \left[ -C e^{-c_0 k^\beta} , C e^{-c_0 k^\beta} \right]$ and so it suffices to construct an $\epsilon /2$ cover for this latter set. By considering a $J$-dimensional cube, we see that it is enough to cover this set by a considering a regular lattice with distance $\epsilon / (2 \sqrt{J})$ between adjacent vertices. Therefore
\begin{equation*}
N \left( X , \norm{\cdot}_{eucl} , \frac{\epsilon}{2}  \right)  \leq \prod_{k=1}^J \frac{2Ce^{-c_0 k^\beta} }{ \epsilon / (2 \sqrt{J})  }  = \left( \frac{C' \sqrt{J} }{\epsilon} \right)^J e^{-c_0 \sum_{k=1}^J k^\beta } .
\end{equation*}
Now by a simple integral comparison test, $\sum_{k=1}^J  k^\beta \geq J^{\beta+1} /(\beta+1)$, so that the logarithm of the right-hand side is bounded above by
\begin{equation*}
C'' J \left( \log J + \log \frac{1}{\epsilon} \right) - c_0 \frac{J^{\beta + 1}}{\beta +1}  \leq C''' \left( \log \frac{1}{\epsilon} \right)^{1 + 1/\beta}.
\end{equation*}
\end{proof}

\begin{proof}[Proof of Proposition \ref{Gaussian1}]
Let us verify the small ball Condition \eqref{small}. Let $\mathbb{H}_{Af}$ denote the RKHS of $Af$ and $\phi_{Af_0}$ denote the concentration function of $Af$ at $Af_0$. Since $Af$ is a Gaussian random element in $\H_2$, we have by Theorem 2.1 of \citep{VVVZ2} that if $Af_0$ is contained in the $\H_2$-closure of $\mathbb{H}_{Af}$ and $\varepsilon_n$ satisfies $\phi_{Af_0} (\varepsilon_n) \leq n \varepsilon_n^2$, then $\P \left( \norm{Af - Af_0}_2 < 2 \varepsilon_n \right) \geq e^{-n \varepsilon_n^2}$. By Lemma \ref{concentration lemma}, the choice $\varepsilon_n = n^{-\frac{\alpha + \gamma \wedge \delta}{2\alpha + 2\delta +1}}$ satisfies this condition in both the cases $\gamma \geq \delta$ and $\gamma \leq \delta$, thereby verifying \eqref{small}.

Recall that we have Karhunen-Lo\`eve expansion $f = \sum_{k=1}^\infty \tau_k \zeta_k e_k$, where $\{ \zeta_k \}$ are i.i.d. standard normal random variables. Consider first the case $\delta \leq \gamma$. Proceeding as in the proof of Proposition \ref{sieve4} and taking $k_n \simeq n \varepsilon_n^2$ in \eqref{Borell2}, we obtain that for $n \geq N$,
\begin{equation*}
\P \left(  \norm{P_{k_n} (f) - f}_1  \geq M(L,N) (n \varepsilon_n^2)^{-\delta} \right)  \leq e^{-Ln \varepsilon_n^2} ,
\end{equation*}
where the constant $M$ increases with $L$. Let $\mathcal{P}_n = \{ f \in \H_1 : \norm{P_{k_n}(f) - f}_1 \leq M \xi_n \}$ for a sufficiently large constant $M$, so that $\Pi (\mathcal{P}_n^c ) \leq e^{-Ln\varepsilon_n^2}$ as long as $(n \varepsilon_n^2)^{-\delta} \leq C \xi_n$ for some $C>0$. This is satisfied by our above choice of $\varepsilon_n$ and so, choosing $L$ sufficiently large to match the constant obtained in the small-ball probability above, this verifies \eqref{bias}. Finally, since $f_0 \in H^\gamma$, we again recover that $\norm{P_{k_n}(f_0) - f_0}_1 \leq C k_n^{-\gamma} \norm{f_0}_{H^\gamma} \simeq (n \varepsilon_n^2)^{-\gamma}$, which is smaller than $\xi_n = \varepsilon_n (n\varepsilon_n^2)^\alpha$ for our choice of $\varepsilon_n$. Applying Theorem \ref{contraction thm} completes the proof.

In the case $\delta > \gamma$ we take $k_n = n^{-\frac{1}{2\alpha+\delta+1}}$, which satisfies $k_n < n \varepsilon_n^2$ with the inequality being strict. Proceeding as above, we see that the second term on the right-hand side of \eqref{Borell2} dominates, yielding that for $n \geq N$,
\begin{equation*}
\P \left(  \norm{P_{k_n} (f) - f}_1  \geq M(L,N) \sqrt{n\varepsilon_n^2} k_n^{-\delta-1/2} \right)  \leq e^{-Ln \varepsilon_n^2} ,
\end{equation*}
so that we can take $\xi_n = \sqrt{n\varepsilon_n^2} k_n^{-\delta-1/2} = n^{-\frac{\gamma}{2\alpha + 2\delta + 1}}$. The remaining conditions for Theorem \ref{contraction thm} can then be verified as in the previous case.
\end{proof}

\begin{proof}[Proof of Proposition \ref{Gaussian2}]
Consider firstly the case where $\gamma + \frac{\beta}{2} \leq \delta$. As above, \eqref{small} is verified if $\phi_{Af_0} (\varepsilon_n) \leq n \varepsilon_n^2$. By Lemma \ref{concentration lemma}, the choice $\varepsilon_n = ( \log n)^\frac{\delta - \gamma + 1/2}{\beta} n^{-1/2}$ satisfies this condition. Now let $k_n$ be an integer satisfying $ (L_1 \log n)^{1/\beta} \leq k_n \leq (L_2 \log n )^{1/\beta}$ for some constants $L_1$, $L_2$, and which therefore satisfies $k_n \leq c n \varepsilon_n^2$ for some constant $c$ and the above choice of $\varepsilon_n$. The quantity in the left-hand side of \eqref{rate} then satisfies
\begin{equation*}
\frac{\varepsilon_n }{\delta_{k_n}} \leq C \varepsilon_n (1+k_n^2)^{\alpha_0 /2} e^{c_0 k_n^\beta} \leq  C' ( \log n )^\eta n^{L_2 c_0 - 1/2} = o \left( (\log n)^{-\gamma / \beta} \right)
\end{equation*}
as $n \rightarrow \infty$ provided that $L_2 c_0 < 1/2$. To verify \eqref{bias}, substitute our choices of $\varepsilon_n$ and $k_n$ into \eqref{Borell2} to get

\begin{equation*}
e^{-Ln\varepsilon_n^2 }  \geq  \P \left( \norm{P_{k_n} (f) - f}_1 \geq C (\log n)^{-\frac{\delta}{\beta}}  + C' (\log n)^{-\frac{\gamma}{\beta} } \right)  .
\end{equation*}
Since $\delta \geq \gamma + \frac{\beta}{2}$ the second term is asymptotically larger, so that taking $L$ sufficiently large, we obtain the required exponential inequality \eqref{bias} with rate $(\log n)^{-\gamma / \beta}$. Since $f_0 \in H^\gamma$, we have that exactly as above $\norm{ P_{k_n} (f_0) - f_0}_1 \leq C k_n^{-\gamma} \leq C' (\log n)^{-\gamma / \beta}$, so that we can apply Theorem \ref{contraction thm}.

Consider now the case where $\gamma + \frac{\beta}{2} \geq \delta$. Arguing as above, the choice $\varepsilon_n = (\log n)^{\frac{\beta +1}{2\beta}} n^{-1/2}$ satisfies the small-ball condition \eqref{small} and for the bias we recover the exponential inequality 
\begin{equation*}
e^{-Ln\varepsilon_n^2}  \geq \P \left( \norm{P_{k_n} (f) - f}_1 \geq C (\log n)^{- \frac{(\delta -\beta /2)}{\beta} } \right)   .
\end{equation*}
By our choice of $\delta$, the above rate is larger than the bias of $f_0$ and so yields the contraction rate.
\end{proof}

\subsection{Proofs of Section \ref{wavelet} (Uniform wavelet series)}

Since we are working the deconvolution setting described in Section \ref{deconvolution section} we firstly note that the Sobolev scale with respect to the Fourier basis corresponds to the classical notion of Sobolev smoothness on $\T$, so that $H^s (\H_1) = H^s ([0,1])$. As mentioned above, periodized Meyer wavelets are band limited and so satisfy Condition \ref{Cond3} which is needed for Theorem \ref{contraction thm}. Moreover, since $\supp (F_\T [\psi] ) \subset [-a,a]$ for some $a>0$, we have by the standard properties of the Fourier transform that the dilated and translated wavelets satisfy $\supp ( F_\T [\psi_{jk} ] ) \subset [-2^j a , 2^j a]$. Recalling definition \eqref{delta}, we therefore have that under Condition \ref{Cond1},
\begin{equation*}
\delta_{2^j} = \inf_{m \in \mathbb{Z} : |m| \leq 2^j a } |F_\T [\mu] (m) | \leq C  (1 + 2^{2j})^{-\alpha /2} .
\end{equation*}
Since the ill-posedness affects the rate $\xi_n$ through \eqref{rate}, we see that using the periodized Meyer wavelet basis rather than the SVD (Fourier basis) only affects the constants and does not negatively affect the rate. In this section note that $\norm{\cdot}_2$ refers to the $L^2 ([0,1])$-norm rather than the $\H_2$-norm.

\begin{proof}[Proof of Proposition \ref{wavelet1}]
We firstly verify the small-ball condition \eqref{small}. Consider the case where $\delta \leq \gamma$. Using the wavelet characterization of the periodic Besov space $B_{22}^s ([0,1]) = H^s ([0,1])$ for $s \in \R$ gives
\begin{equation}
\begin{split}
\norm{h}_{H^{s}}^2  & =  \left\{ | \bar{\alpha} (h)|  + \left(  \sum_{l=0}^\infty \left( 2^{ls} \norm{\beta_{l \cdot} (h) }_{\ell_2} \right)^2 \right)^{1/2} \right\}^2   \\
& \leq   4 \max \left\{  |\bar{\alpha} (h)|^2  , \sum_{l=0}^\infty  2^{2ls} \sum_{k=0}^{2^l - 1} \beta_{lk} (h)^2   \right\}  .
\label{eq5}
\end{split}
\end{equation}
Let $\bar{\alpha}$, $\beta_{lk}$ denote the wavelet coefficients of $f_0$ and note that if $\norm{f_0}_{C^\gamma} \leq B$ then $|\bar{\alpha}| \leq B$ and $|\beta_{lk}| \leq B 2^{-l (\gamma + 1/2) }$ for all $l,k$. By \eqref{eq5}, we lower bound $\P ( \norm{f_0 - U_\delta}_{H^{-\alpha}} \leq  \varepsilon_n )$ by
\begin{equation}
\begin{split}
&  \P \left( \max \left\{  | \bar{\alpha} - u |^2 , \sum_{l=0}^\infty  2^{-2l \alpha} \sum_{k=0}^{2^l - 1} | \beta_{lk}  - 2^{-l(\delta + 1/2)} u_{lk}  |^2   \right\} \leq c_1 \varepsilon_n^2 \right)  \\
& = \P \left(   | \bar{\alpha} - u |^2 \leq c_1 \varepsilon_n^2  \right)    \P \left(   \sum_{l=0}^\infty  2^{-2l \alpha} \sum_{k=0}^{2^l - 1} | \beta_{lk}  - 2^{-l(\delta + 1/2)} u_{lk}  |^2  \leq c_1 \varepsilon_n^2 \right)
\label{eq11}
\end{split}
\end{equation}
using the independence of $u$ and the $u_{lk}$'s. The first probability satisfies
\begin{equation*}
\P \left(   | \bar{\alpha} - u | \leq \sqrt{c_1} \varepsilon_n  \right) \geq \left(  \frac{\sqrt{c_1} \varepsilon_n}{2B} \right)  = e^{ c_2 + \log ( \varepsilon_n /B) } \geq e^{c_3  \log ( \varepsilon_n / B) }
\end{equation*}
for some constant $c_3 = c_3 (\Phi,\Psi)$. Let $b_{lk} = 2^{l(\gamma+ 1/2)} \beta_{lk}$ and pick $J = J(n) $ as defined below. The second probability in \eqref{eq11} becomes
\begin{equation*}
\begin{split}
& \P \left(   \sum_{l=0}^\infty  2^{-l(2\alpha + 2\gamma + 1)} \sum_{k=0}^{2^l - 1} | b_{lk}  - 2^{-l(\delta -\gamma)} u_{lk}  |^2  \leq c_1 \varepsilon_n^2 \right)   \\
  &  \geq \P \left(   \sum_{l=0}^\infty  2^{-2l(\alpha+\gamma)} \sup_{0 \leq k < 2^l}  | b_{lk}  -  2^{-l (\delta - \gamma) }u_{lk}  |^2 \leq c_1 \varepsilon_n^2 \right)   \\
  & \geq   \P \left(  \sum_{l=0}^J  2^{-2l(\alpha +\gamma)}  \sup_{0 \leq k < 2^l} | b_{lk} -  2^{-l (\delta - \gamma )} u_{lk} |^2   +  CB^2  \sum_{l=J+1}^\infty   2^{-2l( \alpha + \delta) }  \leq c_1 \varepsilon_n^2 \right) .
\end{split}
\end{equation*}
Pick the truncation level $J = J(n)$ so that $B^2 2^{-2J( \alpha + \delta )} \simeq \varepsilon_n^2$, that is $2^J \simeq ( \varepsilon_n / B)^{- 1/ (\alpha+ \delta)}$. Note that since $|b_{lk}| \leq B$ and $\delta \leq \gamma$, we can lower bound the individual probabilities via
\begin{equation*}
\P \left( | b_{lk} - 2^{-l(\delta - \gamma) } u_{lk} | \leq c \varepsilon_n \right) \geq \left(  \frac{c \varepsilon_n}{2^{ l (\gamma - \delta) + 1}B} \right) > 0.
\end{equation*}
Then, choosing the constants defining $J(n)$ appropriately, we have
\begin{equation*}
\begin{split}
&  \P \left(  \sum_{l=0}^J  2^{-2l(\alpha + \gamma)}  \sup_{0 \leq k < 2^l} | b_{lk} -  2^{-l (\delta - \gamma) }u_{lk} |^2      \leq c_1 \varepsilon_n^2 - c(\alpha,\delta) B^2 2^{-2J(\alpha+\delta )}   \right) \\
& \geq \P \left(  \max_{0 \leq l \leq J}  \sup_{0 \leq k < 2^l} | b_{lk} - 2^{-l (\delta - \gamma) } u_{lk} | \leq c_4 \varepsilon_n  \right)  \\
& = \prod_{l=0}^J  \prod_{k=0}^{2^l -1} \P \left(  | b_{lk} -  2^{-l (\delta - \gamma) } u_{lk} | \leq c_4 \varepsilon_n  \right)    \geq \prod_{l=0}^J  \prod_{k=0}^{2^l -1}   \left( \frac{c_4 \varepsilon_n}{2^{ l(\gamma - \delta) + 1} B} \right)  \\
& \geq \exp \left(  c_5  \log ( \varepsilon_n / B )  \sum_{l=0}^J 2^l  - c_6 \sum_{l=0}^J l 2^l \right)   \geq   e^{ c_7 ( \varepsilon_n / B)^{-1/(\alpha + \delta) }  \log ( \varepsilon_n / B) } ,
\end{split}
\end{equation*}
for $n \geq N(\alpha , \delta, B, \psi)$ and we have used that $J \simeq - \log ( \varepsilon_n / B)$ in the last line. Using \eqref{eq11} and that $h(B_0) > 0$ for some $B_0 \geq \norm{f_0}_{C^\gamma}$, we have that for $n \geq N (\alpha ,\delta , B_0 , \psi)$,
\begin{equation}
\begin{split}
\P ( \norm{ f_0 - U_\delta}_{H^{-\alpha}} \leq \varepsilon_n )  &  \geq h(B_0) e^{ c_3 \log ( \varepsilon_n / B_0) } e^{ c_7 (\varepsilon_n/ B_0)^{- 1/ (\alpha+\delta)} \log ( \varepsilon_n / B_0) }  \\
& \geq e^{c_8 \varepsilon_n^{- 1/ (\alpha+\delta)} \log \varepsilon_n}  ,
\label{eq8}
\end{split}
\end{equation}
so that \eqref{small} is satisfied by the choice $\varepsilon_n \simeq \left(  \frac{\log n }{n} \right)^{\frac{\alpha+\delta}{2\alpha + 2\delta+1}}$.

Consider now the case $\gamma < \delta \leq \gamma + \frac{1}{\nu}$, where we can establish \eqref{small} in a similar fashion by using an approximation argument. Recall that $f_0 \in C^\gamma ([0,1])$ and let $h_r$ be the best $H^{-\alpha}$-approximation of $f_0$ such that $\norm{h}_{C^\delta} \leq r$. Write $h_r = \theta_0 \phi + \sum_{l=0}^\infty \sum_{k=0}^{2^l-1} \theta_{lk} \psi_{lk}$, where $|\theta| \leq r$ and $|\theta_{lk}| \leq r 2^{-l(\delta +1/2) }$, and recall that the wavelet coefficients of $f_0$ satisfy $|\beta_{lk}| \leq B_0 2^{-l (\gamma + 1/2) }$ for $B_0 \geq \norm{f_0}_{C^\gamma}$. Let $l_r$ be the smallest integer such that $2^{l_r (\delta - \gamma)} > r/ B_0$, so that in particular $\theta_{lk} = \beta_{lk}$ for all $l < l_r$. Then
\begin{equation*}
\norm{f_0 - h_r}_{H^{-\alpha}}^2   \leq \sum_{l=l_r}^\infty 2^{-2l(\alpha + \gamma)} \left( B_0 - r 2^{-l (\delta - \gamma) } \right)^2   \leq C B_0^2 2^{-2l_r (\alpha + \gamma)} ,  
\end{equation*}
so that $\norm{f_0 - h_r}_{H^{-\alpha}} \leq C(f_0) r^{-\frac{\alpha + \gamma}{\delta - \gamma}}$ by the definition of $l_r$. Pick $r_n$ to be the smallest integer such that $r_n^{-\frac{\alpha + \gamma}{\delta - \gamma}} \leq \frac{\varepsilon_n}{2C} $, so that by the triangle inequality, $\P \left( \norm{f_0 - U_\delta}_{H^{-\alpha}} \leq \varepsilon_n \right)  \geq \P \left( \norm{h_{r_n} - U_\delta}_{H^{-\alpha}} \leq c \varepsilon_n \right)$ for some $1/2 \leq c < 1$. Since $\norm{h_{r_n}}_{C^\delta}\leq r_n$, we use \eqref{eq8} to obtain
\begin{equation*}
\P \left( \norm{f_0 - U_\delta}_{H^{-\alpha}} \leq \varepsilon_n  \right)   \geq h(r_n) \exp \left( c_1 \left(  \frac{\varepsilon_n}{r_n} \right)^{-\frac{1}{\alpha + \delta} }  \log \frac{\varepsilon_n}{r_n} \right)  .
\end{equation*}
Since $\delta \leq \gamma + \frac{1}{\nu}$, $h(r) \geq e^{-D r^\nu}$ for all $r \in \mathbb{N}$, and $r_n \geq c_2 \varepsilon_n^{-\frac{\delta - \gamma}{\alpha + \gamma} } $ for some $c_2>0$ and sufficiently large $n$, we obtain the lower bound $\exp \left( -d_2  \varepsilon_n^{-\frac{1}{\alpha + \gamma}} \log \frac{1}{\varepsilon_n} \right)$. Bounding this from below by $e^{-Cn\varepsilon_n^2}$ yields the choice $\varepsilon_n = \left( \frac{\log n}{n} \right)^\frac{\alpha + \gamma}{2\alpha + 2\gamma + 1}$.

Consider now the bias condition \eqref{bias} and, using the notation of wavelets, take $k_n = 2^{J_n} \simeq n \varepsilon_n^2$. Let $\mathcal{B}_r = \supp (\Pi^{\delta ,r})$ denote the $C^\delta ([0,1])$-ball of radius $r$. Let $r_n$ be an integer satisfying $( L_1 n \varepsilon_n^2)^{1/\nu} \leq r_n \leq (L_2  n \varepsilon_n^2)^{1 / \nu} $ for some constants $L_1$, $L_2$ and take $\mathcal{P}_n = \mathcal{B}_{r_n}$. Then $\Pi (\mathcal{P}_n^c ) = 1 - H (r_n) \lesssim e^{-D r_n^\nu} \leq e^{-L n \varepsilon_n^2}$, where $L$ is a constant that can be made sufficiently large by increasing $L_1$. Now for all functions $f \in \mathcal{B}_r$, $\sup_k |\beta_{lk}(f)| \leq r 2^{-l(\delta + 1/2)}$ for all $l \geq 0$. Consequently,
\begin{equation*}
\norm{K_{J_n}(f) - f}_2^2  = \sum_{l = J_n}^\infty \sum_{k = 0}^{2^l -1} |\beta_{lk}(f)|^2 \leq  \sum_{l=J_n}^\infty \sum_{k=0}^{2^l-1} r^2 2^{-l (2\delta + 1)}  \leq C r^2 2^{-2\delta J_n},
\end{equation*}
so that for all $f \in \mathcal{P}_n$,
\begin{equation*}
\norm{K_{J_n}(f) - f}_2 \leq C' ( n \varepsilon_n^2 )^{1 / \nu - \delta} \leq C'' \xi_n =  C''' \varepsilon_n (n \varepsilon_n^2)^\alpha  ,
\end{equation*}
which is verified with the choice $\varepsilon_n = n^{-\frac{ \alpha + \delta - 1/\nu }{ 2\alpha + 2\delta -2/\nu + 1 } }$. Comparing this rate to the rates obtained when verifying \eqref{small} we obtain the minimal choices $\varepsilon_n =  n^{-\frac{ \alpha + \delta - 1/\nu }{ 2\alpha + 2\delta -2/\nu + 1 } }$ when $\delta < \gamma + \frac{1}{\nu}$ and $\varepsilon_n = (\log n /n)^{\frac{\alpha + \gamma}{2\alpha + 2\gamma + 1}}$ when $\delta = \gamma + \frac{1}{\nu}$. For the true function $f_0 \in C^\gamma([0,1])$, using a standard approximation bound gives $\norm{K_{J_n} (f_0) - f_0 }_2  \leq C(f_0) 2^{-\gamma J_n} \simeq (n \varepsilon_n^2)^{-\gamma} = O(\xi_n)$ for all the above choices of $\varepsilon_n$. In both cases, apply Theorem \ref{contraction thm} to obtain rate $\xi_n = \varepsilon_n (n \varepsilon_n^2)^\alpha$.

Consider now the stronger tail condition $1 - H(r) \lesssim \exp \left( -e^{Dr^\nu} \right)$ as $r \rightarrow \infty$ for some $\nu > 0$. When $\delta \leq \gamma$, \eqref{small} is satisfied as above by the choice $\varepsilon \simeq \left( \frac{\log n}{n} \right)^{\frac{\alpha + \delta}{\alpha + 2\delta +1} }$. Letting $r_n$ be an integer satisfying $ ( \log ( L_1 n \varepsilon_n^2 ) )^{1/\nu}  \leq r_n \leq (  \log ( L_2 n \varepsilon_n^2 ) )^{1/\nu}$ for some constants $L_1$, $L_2$ and taking $\mathcal{P}_n$ as above we obtain $\Pi (\mathcal{P}_n^c ) \lesssim \exp \left( -e^{Dr_n^\nu} \right) \leq e^{-Ln\varepsilon_n^2}$ for some constant $L$ that can be made arbitrarily large by increasing $L_1$. Using the above bias calculations, $\norm{ K_{J_n} (f) - f}_2 \leq C r_n 2^{-\delta J_n} \leq C' r_n (n \varepsilon_n^2 )^{-\delta}$ and so setting this equal to $\xi_n = n^\alpha \varepsilon_n^{2\alpha + 1}$ yields that \eqref{bias} is satisfied by the choice $\varepsilon_n = (\log n)^{\frac{1/v}{2\alpha + 2\delta +1}}   n^{-\frac{\alpha + \delta}{2\alpha + 2\delta + 1}}$. Substituting this expression into that of $\xi_n$ gives the desired contraction rate.
\end{proof}

\section*{Acknowledgements}
The author would like to thank Richard Nickl for his invaluable help and Isma\"el Castillo for valuable discussions. The author would also like to thank the Associate Editor and two referees for their useful comments in substantially improving the presentation of this article, as well as Madhuresh Roy for pointing out a minor error in the proof of Proposition \ref{Gaussian1}. This work was supported by the UK Engineering and Physical Sciences Research Council (EPSRC) grant EP/H023348/1.

\end{document}